\pgfplotsset{width=10cm,compat=1.9}
\theoremstyle{plain}
\newtheorem{theorem}{Theorem}[section]
\newtheorem{prop}[theorem]{Proposition}
\newtheorem{conjecture}[theorem]{Conjecture}
\newtheorem{corollary}[theorem]{Corollary}
\newtheorem{lemma}[theorem]{Lemma}
\theoremstyle{definition}
\theoremstyle{remark}
\newtheorem{remark}[theorem]{Remark}
\DeclareMathOperator{\conv}{conv}
\DeclareMathOperator{\vol}{vol}
\DeclareMathOperator{\tr}{tr}
\DeclareMathOperator{\Int}{int}
\DeclareMathOperator{\pyr}{pyr}
\DeclareMathOperator{\bipyr}{bipyr}
\title{A note concerning frames and geometric inequalities}
\author{J. Ledford, K. Rivera-Ayala, E. Schroeder}
\address{Longwood University, 201 High St., Farmville, VA, 23901. USA}
\email{ledfordjp@longwood.edu}
\email{kevin.riveraayala@live.longwood.edu}
\email{emma.schroeder@live.longwood.edu}
\date{}
\thanks{The authors would like to thank the Perspectives on Research In Science \& Mathematics program at Longwood University for sponsoring this research.}
\begin{document}
\begin{abstract}
   One may associate several frames to a given polytope, such as its collection of vertices, edges, or facet normal vectors.  In this note, we use these frames to generate geometric inequalities for the simplex in $\mathbb{R}^d$ and polytopes with $d+2$ vertices in dimension 2 and 3.
   \end{abstract}

\subjclass[2020]{41A99, 52B10, 52B11, 52B60}

\maketitle

\section{Introduction}

Our goal in this paper is to prove geometric inequalities for polytopes using tools from frame theory.  The machinery of frames allows us to uncover some well-known as well as novel results. 

Geometric inequalities have been well studied, we seek inequalities which allow us to compare internal content with external content.  The most famous of these results is the isoperimetric inequality, that bounds the area $A$ inside a given curve by the square of its arclength $L$:
\[
4\pi A\leq L^2.
\]
Since we restrict our attention to polytopes, our results will look more like the well-known simplex version of the above inequality, \cite{veljan_1995_sine}.  For a simplex $T\subset\mathbb{R}^d$:
\[
\frac{\sqrt{d^{3d}(d+1)^{d+1}}}{d!}\vol_{d}(T)^{d-1}\leq\left(\sum_{F\in\mathcal{F}_{d-1}(T)} \vol_{d-1}(F)\right)^d.
\]
More recently, this theme has appeared in 
\cite{bilyk_2023} and \cite{cahill_2023}, with the focus on frame attributes that lead to optimal results.  In general, the results correspond to choosing the vertex vectors of a polytope as the frame vectors.

In this paper, we fix a polytope, then choose different sets of vectors for the frame vectors, such as the edges and normal vectors in addition to the vertices.  This allows us to generate several geometric inequalities for the simplex.

The remainder of this paper is organized as follows.  Section 2 contains definitions and basic facts that will be used throughout.  Section 3 provides results for vertex and edge frames associated to a simplex.  Section 4 presents results for the normal frame of a simplex and includes a version of the isoperimetric inequality.  In Section 5, we collect results for polytopes with $d+2$ vertices, explicitly examining the low-dimensional cases while providing a conjecture for higher dimensions.

%%%%%%%%%%%%%%%%
%%%%%%%%%%%%%%%% Preliminaries
%%%%%%%%%%%%%%%%

\section{Preliminaries}

In this section, we collect well-known results on finite frames and geometry that will be useful in the sequel.  For the set of real numbers, we use $\mathbb{R}$, while $\mathbb{R}^d$ is the collection of $d$-vectors.  We use $\langle\cdot,\cdot \rangle$ for the standard inner product.  For $p\in [1,\infty)$, by $\ell_p$, we mean the collection of sequences $y=(y_j)$ such that $\| y\|_{\ell_p}:=\left(\sum|y_j|^p\right)^{\frac{1}{p}} <\infty $.

It is well known that a spanning set of vectors $\{v_1,\dots, v_n\}\subset\mathbb{R}^d$ forms a \emph{frame} for $\mathbb{R}^d$, \cite{Casazza_book}.  This means that there are positive numbers $A$ and $B$ such that for all $x\in\mathbb{R}^d$
\[
A\| x\|_{\ell_2}^2\leq \sum_{j=1}^{n} |\langle x, v_j \rangle|^2 \leq B \| x \|_{\ell_2}^{2}.
\]
Associated to a frame $\{v_1,\dots,v_n \}$ are three operators, the \emph{analysis operator} $F:\mathbb{R}^d \to \ell_2$ given by
\[
Fx:=\left( \langle x,v_j\rangle: 1\leq j\leq n \right),
\]
the \emph{synthesis operator} given by the adjoint of $F$, that is, $F^*:\ell_{2}\to \mathbb{R}^d$, where 
\[
F^*y=\sum_{j=1}^n y_j v_j,
\]
and the \emph{frame operator} $S:\mathbb{R}^d\to\mathbb{R}^d$ given by $S:=F^*F$.  
\begin{remark}
In terms of our frame vectors $\{v_1,\dots,v_n\}$, the synthesis operator is given by the $d\times n$ matrix whose columns vectors are $v_j$, $1\leq j\leq n$, while the analysis operator is the conjugate transpose of this matrix.  Hence the frame operator may be expressed as a $d\times d$ matrix.
\end{remark}

Note that we may write out the frame inequality as 
\[
A\| x \|_{\ell_2}^{2}\leq \langle Sx,x\rangle \leq B \| x\|_{\ell_2}^{2},
\]
hence any eigenvalue $\lambda$ of $S$, satisfies $0<A\leq \lambda \leq B$.  This implies that the frame operator $S$ is invertible.  This is called the \emph{Rayleigh-Ritz characterization} in \cite[Theorem 3.2]{rivin_2003}, but is implied by the \emph{Courant minmax principle} found in \cite[p. 31-34]{Courant_Hilbert_1953}.

Furthermore, we may apply the arithmetic mean-geometric mean inequality to the eigenvalues to deduce the \emph{trace-determinant inequality}:
\begin{equation}\label{T-D ineq}
    \det(S)\leq\left(\frac{\tr(S)}{d} \right)^d.
\end{equation}
This inequality may be exploited to produce several geometric inequalities.

In order to facilitate the use of this inequality, we use the following version of the Cauchy-Binet formula.

\begin{lemma}[Cauchy-Binet formula]\label{det S lemma}
Let $d\in\mathbb{N}$ and $n\geq d$ and suppose that $\mathcal{V}=\{v_1,v_2,\dots,v_n\}$ is a frame for $\mathbb{R}^d$ with frame operator $S$.  We have
\[
\det(S)=\sum_{j=1}^{\binom{n}{d}}\det(V_j^T V_j)=\sum_{j=1}^{\binom{n}{d}}\det(V_j)^2,
\]
where $V_j$ is a $d\times d$ matrix whose $d$ column vectors are selected from $\mathcal{V}$.  
\end{lemma}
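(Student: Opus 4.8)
The plan is to reduce the statement to the classical Cauchy-Binet formula and then prove the latter by a direct expansion of the determinant. First I would fix notation: writing $M:=F^*$ for the $d\times n$ synthesis matrix whose columns are $v_1,\dots,v_n$, the analysis operator is $F=M^T$ and the frame operator is $S=F^*F=MM^T$. The rightmost equality in the statement, $\det(V_j^T V_j)=\det(V_j)^2$, is then immediate, since each $V_j$ is square, so that $\det(V_j^T)=\det(V_j)$ and the determinant is multiplicative; thus the real content is the middle identity $\det(MM^T)=\sum_{|I|=d}\det(M_I)^2$, where $I$ ranges over the $\binom{n}{d}$ subsets of $\{1,\dots,n\}$ of size $d$ and $M_I$ is the $d\times d$ matrix formed from the corresponding columns of $M$.

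To prove this, I would start from the Leibniz formula
\[
\det(MM^T)=\sum_{\sigma\in S_d}\operatorname{sgn}(\sigma)\prod_{i=1}^d (MM^T)_{i,\sigma(i)},
\]
and substitute the entrywise expression $(MM^T)_{i,k}=\sum_{j=1}^n M_{ij}M_{kj}$. Expanding each of the $d$ factors converts the product over $i$ into a sum over tuples $(j_1,\dots,j_d)\in\{1,\dots,n\}^d$; after interchanging the order of summation I would isolate, for each fixed tuple, the scalar $\prod_i M_{i,j_i}$ together with the alternating sum $\sum_{\sigma}\operatorname{sgn}(\sigma)\prod_i M_{\sigma(i),j_i}$. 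A short check shows that the latter is exactly the determinant of the $d\times d$ matrix whose columns are columns $j_1,\dots,j_d$ of $M$.

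The remaining steps are combinatorial clean-up. Any tuple with a repeated index yields a matrix with two equal columns and hence a vanishing determinant, so only tuples of distinct indices survive. Grouping these by the underlying $d$-subset $I=\{k_1<\cdots<k_d\}$, each ordering of $I$ corresponds to a permutation $\tau$; reordering the columns back into increasing order produces a sign $\operatorname{sgn}(\tau)$, which combines with $\prod_i M_{i,k_{\tau(i)}}$ so that summing over $\tau$ reassembles a \emph{second} copy of $\det(M_I)$, giving $\det(M_I)^2$ for each subset $I$. Summing over all $\binom{n}{d}$ subsets then gives the claim. I expect the main obstacle to be precisely this sign and regrouping bookkeeping: one must verify carefully that the permutation sign absorbed by reordering the columns of $M_I$ is the same $\operatorname{sgn}(\tau)$ appearing in the Leibniz expansion of $\det(M_I)$, so that the two determinant factors genuinely coincide rather than differing by a stray sign. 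An alternative, more streamlined route would be to invoke the exterior-algebra identity $\det(MM^T)=\Lambda^d M\,(\Lambda^d M)^T$ and read off the coordinates of $\Lambda^d M$ as the $d\times d$ minors $\det(M_I)$, but the elementary expansion above keeps the argument self-contained.
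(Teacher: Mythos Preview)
Your argument is correct: the reduction to $\det(MM^{T})$ is exactly the right translation, the Leibniz expansion followed by the regrouping over $d$-subsets is the standard elementary proof of Cauchy--Binet, and your sign bookkeeping is accurate (the column reordering contributes $\operatorname{sgn}(\tau)$ to the determinant factor, while the remaining product $\prod_i M_{i,k_{\tau(i)}}$ summed against $\operatorname{sgn}(\tau)$ reconstitutes the second copy of $\det(M_I)$).

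The only thing to note is that the paper does not actually prove this lemma. It is stated under the heading ``Cauchy--Binet formula'' as a classical identity and used as a black box throughout; no proof is supplied. So your proposal goes beyond what the paper does by giving a self-contained derivation, whereas the paper simply cites the result. Either your Leibniz-expansion route or the exterior-algebra route you mention would be an acceptable justification; both are more than the paper itself provides.
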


We denote by $\mathcal{P}_{d}$ the collection of convex polytopes inscribed in the unit sphere $\mathbb{S}^{d-1}\subset\mathbb{R}^d$ that contain the origin in their interior and whose $d$-dimensional volume $\vol_d$ is non-zero.  For $P\in \mathcal{P}_d$ and $1\leq j\leq d$, we denote by $\mathcal{F}_j(P)$ the collection of $j$-dimensional faces of $P$.  The following sets of vectors are natural choices for frames (for $\mathbb{R}^d$):
\begin{align*}
    \text{the vertices } V(P)&=\mathcal{F}_{0}(P),\\
    \text{the edges } E(P)&=\mathcal{F}_1(P),\\
    \text{the centroids of the facets }C(P)&=\left\{c_F\in\mathbb{R}^d: c_F \text{ is centroid of } F\in\mathcal{F}_{d-1}(P) \right\},\text{ and}\\
    \text{ the unit normals }N(P)&=\{\hat{n}\in\mathbb{R}^d: n\perp F,F\in\mathcal{F}_{d-1}(P)\}.
\end{align*}
We denote the corresponding frame operators by $S_V(P), S_E(P),S_C(P)$ and $S_N(P)$, respectively.

\begin{remark}
    Since the convex hull of the centroids of the facets of $T$ is itself a simplex similar to $T$, applying the techniques to the centroid frame $C(T)$ reproduces Proposition \ref{thm: S_V simplex}. 
\end{remark}

\begin{remark}
    If $P\in\mathcal{P}_d$, then we could generate a frame for $\mathbb{R}^d $ as follows. For fixed $1\leq j\leq d$, associate to each $F\in\mathcal{F}_j(P)$ the vector $v_F$ that connects the origin to the $j$-dimensional centroid of $F$.  The collection $\{\hat{v}_F: F\in\mathcal{F}_j(P)\}$ is a frame for $\mathbb{R}^d$.  Scaling these vectors with positive weights also generates a frame for $\mathbb{R}^d$.

    In the construction above, we could replace the centroid with any point $p\in \Int(F)$, where $F\in\mathcal{F}_j(P)$. The analysis of such frames will be the subject of a subsequent project.
\end{remark}

%%%%%%%%%%%%%%%%%%%%%%
%%%%%%%%%%%%%%%%%%%%%% Inequalities for the vertex and edge frames
%%%%%%%%%%%%%%%%%%%%%%

\section{Vertex and Edge Frame Inequalities for the Simplex}

In this section, we consider a fixed but otherwise arbitrary simplex $T\in\mathcal{P}_d$ and apply the trace determinant inequality to the vertex and edge frame operators.  We begin with $S_V(T)$.
\begin{prop}\label{thm: S_V simplex}
    Suppose that $T\in\mathcal{P}_d$ is a simplex, then
    \begin{equation}\label{S_V geometric ineq}
    \|\vol_d(T)  \|_{\ell_2}\leq\frac{1}{d!}\left(1+\frac{1}{d}\right)^{\frac{d}{2}}.
\end{equation}
\end{prop}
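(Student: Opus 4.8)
The plan is to combine the trace-determinant inequality with the Cauchy-Binet formula, both recorded in the preliminaries. First I would identify the quantity $\|\vol_d(T)\|_{\ell_2}$ appearing on the left-hand side: since $T$ is a simplex with $d+1$ vertices, there is essentially one top-dimensional volume to speak of, so I expect this norm to reduce to $\vol_d(T)$ itself (possibly the notation is anticipating the edge-frame statement where several sub-volumes appear). The key object is the vertex frame operator $S_V(T)$ whose synthesis matrix $V$ has the $d+1$ vertices of $T$ as its columns.

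Next I would compute $\det(S_V(T))$ via Lemma \ref{det S lemma}. The sum runs over the $\binom{d+1}{d}=d+1$ ways of choosing $d$ of the $d+1$ vertices; each such choice gives a $d\times d$ matrix $V_j$ whose squared determinant is, up to the factor $(d!)^2$, the squared volume of the simplex spanned by the origin and those $d$ vertices. I would then argue that each of these $d+1$ determinants relates to $\vol_d(T)$, so that $\det(S_V(T))$ is a fixed multiple of $\vol_d(T)^2$ (this is where the $(d!)^2$ will enter). Simultaneously I would compute $\tr(S_V(T))=\sum_{j}\|v_j\|_{\ell_2}^2$, and here the hypothesis $T\in\mathcal{P}_d$ is essential: the vertices lie on the unit sphere $\Sp^{d-1}$, so each $\|v_j\|_{\ell_2}^2=1$ and the trace is simply the number of vertices, $d+1$.

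With these two computations in hand, I would feed them into the trace-determinant inequality \eqref{T-D ineq}, $\det(S)\leq(\tr(S)/d)^d$, obtaining $c\,\vol_d(T)^2\leq\left(\frac{d+1}{d}\right)^d$ for the appropriate constant $c$ involving $(d!)^2$. Rearranging and taking square roots should yield exactly $\vol_d(T)\leq\frac{1}{d!}\left(1+\frac1d\right)^{d/2}$, which matches the stated bound after rewriting $\frac{d+1}{d}=1+\frac1d$.

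The main obstacle I anticipate is the determinant bookkeeping: verifying precisely that each of the $d+1$ minors $\det(V_j)^2$ equals $(d!\,\vol_d(T))^2$, or more carefully that their sum equals a clean multiple of $\vol_d(T)^2$. A priori the $d+1$ sub-simplices spanned by the origin together with $d$ chosen vertices need not all have the same volume, so I would either invoke a symmetry or, more robustly, use the relation between these origin-based volumes and the volume of $T$ via an affine/barycentric argument. Getting the constant exactly right (the interplay of the factor $d+1$ from the sum, the $(d!)^2$ normalization, and the $1/d^d$ from the trace term) is the delicate computational step; everything else is a direct substitution into the two inequalities already granted.
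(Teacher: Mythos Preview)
Your overall strategy---apply Cauchy--Binet to compute $\det(S_V(T))$, use that the vertices lie on $\mathbb{S}^{d-1}$ to get $\tr(S_V(T))=d+1$, then plug into the trace--determinant inequality---is exactly the paper's proof. The only issue is your reading of the left-hand side.

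The quantity $\|\vol_d(T)\|_{\ell_2}$ is \emph{not} $\vol_d(T)$. In the paper's convention (made explicit in the proof and in the line ``Noting that $\vol_d(T)=\|\vol_d(T)\|_{\ell_1}$'' just after), $\|\vol_d(T)\|_{\ell_p}$ denotes the $\ell_p$-norm of the vector $(\vol_d(T_1),\dots,\vol_d(T_{d+1}))$, where $T_j$ is the simplex obtained by replacing the vertex $v_j$ with the origin. So Cauchy--Binet gives directly
\[
\det(S_V(T))=\sum_{j=1}^{d+1}\bigl(d!\,\vol_d(T_j)\bigr)^2=(d!)^2\,\|\vol_d(T)\|_{\ell_2}^2,
\]
and this is already the left-hand side of the statement---no further step is required. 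The ``obstacle'' you anticipate (showing the sum of squared sub-volumes is a fixed multiple of $\vol_d(T)^2$) is both unnecessary and in fact false for a generic simplex; attempting it would at best recover the weaker $\ell_1$-bound of Corollary~\ref{cor: vol bound} via Cauchy--Schwarz, not the proposition itself. Once you read the notation correctly, your proof is complete and identical to the paper's.
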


\begin{proof}
For a simplex $T\in\mathcal{P}_d$, denote its vertex frame $V=\{v_1,\dots,v_{d+1}.\}$  For the frame operator $S_V(T)$, we have $\tr(S_V(T))=d+1$ and by Lemma \ref{det S lemma}, 
\[\det(S_V(T))=\sum_{j=1}^{d+1}\left(d!\vol_d(T_j)\right)^2=(d!)^2\|\vol_d(T) \|_{\ell_2}^2,\]
where $T_j$ is the simplex formed by vertex vectors from $T$, replacing the vertex $v_j$ with the origin.  Now \eqref{T-D ineq} provides the result.
\end{proof}

Noting that $\vol_{d}(T)=\|\vol_d(T)\|_{\ell_1}$ we may recover \cite[Theorem 2.2]{gerber_1975} by applying the Cauchy-Schwarz inequality.

\begin{corollary}\label{cor: vol bound}
    Suppose that $T\in\mathcal{P}_d$ is a simplex, then
    \begin{equation}\label{S_V l_1 ineq}
        \vol_{d}(T)\leq \frac{\sqrt{d+1}}{d!}\left( 1+\frac{1}{d}\right)^{\frac{d}{2}},
    \end{equation}
    with equality if and only if $T=\Delta$, the regular simplex.
\end{corollary}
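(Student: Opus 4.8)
The plan is to deduce the corollary directly from Proposition~\ref{thm: S_V simplex} by passing from the $\ell_2$-norm of the sub-volume vector to its $\ell_1$-norm, which is exactly $\vol_d(T)$. Recall from the proof of Proposition~\ref{thm: S_V simplex} that $\vol_d(T)$ is read as the vector $\left(\vol_d(T_1),\dots,\vol_d(T_{d+1})\right)\in\R^{d+1}$, where $T_j$ is obtained from $T$ by replacing the vertex $v_j$ with the origin. The first observation is that, because $T\in\mathcal{P}_d$ has the origin in its interior, the simplices $T_1,\dots,T_{d+1}$ have pairwise disjoint interiors and tile $T$; consequently $\sum_{j=1}^{d+1}\vol_d(T_j)=\vol_d(T)$, and since each summand is nonnegative this is precisely the identity $\|\vol_d(T)\|_{\ell_1}=\vol_d(T)$ used in the remark preceding the statement.

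With this identification, the inequality itself is a short chain. Applying Cauchy--Schwarz to the $(d+1)$-vector $\vol_d(T)$ against the all-ones vector gives $\|\vol_d(T)\|_{\ell_1}\le\sqrt{d+1}\,\|\vol_d(T)\|_{\ell_2}$, and Proposition~\ref{thm: S_V simplex} bounds the right-hand side, yielding
\[
\vol_d(T)=\|\vol_d(T)\|_{\ell_1}\le\sqrt{d+1}\,\|\vol_d(T)\|_{\ell_2}\le\frac{\sqrt{d+1}}{d!}\left(1+\frac{1}{d}\right)^{\frac{d}{2}}.
\]
This is the asserted bound, so the content of the statement is really the equality case.

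This is where I expect the real work to lie. Equality forces equality at every step of the chain; in particular, equality in the trace-determinant inequality~\eqref{T-D ineq} requires all eigenvalues of $S_V(T)$ to coincide. Since the vertices are unit vectors, $\tr(S_V(T))=d+1$, so this forces $S_V(T)=\tfrac{d+1}{d}I$, i.e. the vertices form a unit-norm tight frame. To convert this into regularity I would pass to the Gram matrix $G=\left(\langle v_i,v_j\rangle\right)$, which shares the nonzero spectrum of $S_V(T)$ and hence has eigenvalue $\tfrac{d+1}{d}$ with multiplicity $d$ and eigenvalue $0$ with multiplicity one. Writing $u$ for a unit kernel vector, one gets $\tfrac{d}{d+1}G=I-uu^{T}$; reading off the diagonal entries, all equal to $1$, forces $u_i^2=1/(d+1)$ for every $i$, whence $\langle v_i,v_j\rangle=\pm\tfrac1d$ for $i\neq j$.

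The main obstacle is ruling out the mixed signs. Since $u$ lies in the kernel of the synthesis operator, $\sum_i u_i v_i=0$ is a linear dependence among the $v_i$, and for $d+1$ spanning vectors in $\R^d$ this dependence is unique up to scaling. Because the origin lies in the interior of $T$, it is a strictly positive convex combination of the $v_i$, so the unique dependence has all coefficients of one sign; hence all $u_i$ share a sign and $\langle v_i,v_j\rangle=-\tfrac1d$ for all $i\neq j$, which is exactly the regular simplex $\Delta$. Conversely, for $\Delta$ the vertex frame is tight and the permutation symmetry makes all $\vol_d(T_j)$ equal, so both the trace-determinant step and the Cauchy--Schwarz step are equalities and the bound is attained.
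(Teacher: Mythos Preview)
Your argument is correct and follows exactly the paper's route: apply Cauchy--Schwarz to the $(d+1)$-vector of sub-volumes and invoke Proposition~\ref{thm: S_V simplex}. The paper does not spell out the equality case (it simply cites \cite{gerber_1975}), whereas you supply a complete and correct characterization via the Gram matrix and the interior-point condition; this is a genuine addition rather than a deviation.
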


Applying the trace-determinant inequality to $S_E(T)$, allows us to extend a result for the tetrahedron found in \cite{Scott_2005}.
\begin{prop}\label{thm: S_E simplex}
     Suppose that $T\in\mathcal{P}_d$ is a simplex, then
     \begin{equation}\label{S_E geometric ineq}
    \vol_d(T)\leq \frac{\|\vol_1(T) \|_{\ell_2}^d}{d!(d+1)^{\frac{d-1}{2}}d^{\frac{d}{2}}}.
\end{equation}
\end{prop}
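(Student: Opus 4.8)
The plan is to follow the template of Proposition~\ref{thm: S_V simplex}: compute $\tr(S_E(T))$ and $\det(S_E(T))$ explicitly, then feed them into the trace-determinant inequality~\eqref{T-D ineq}. Write the vertices of $T$ as $v_1,\dots,v_{d+1}$, so the edge frame $E(T)$ consists of the $\binom{d+1}{2}$ difference vectors $v_i-v_j$ for $1\le j<i\le d+1$. The trace is immediate, since $\tr(S)=\sum_k\|w_k\|_{\ell_2}^2$ for any frame with vectors $w_k$; here each $\|v_i-v_j\|_{\ell_2}$ is the $1$-volume of the corresponding edge, so that
\[
\tr(S_E(T))=\sum_{j<i}\|v_i-v_j\|_{\ell_2}^2=\|\vol_1(T)\|_{\ell_2}^2.
\]

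The determinant is the crux, and this is the step I expect to be the main obstacle. By the Cauchy-Binet formula (Lemma~\ref{det S lemma}), $\det(S_E(T))=\sum_S\det(V_S)^2$, where $S$ ranges over the $d$-element subsets of the edges and $V_S$ is the $d\times d$ matrix whose columns are the chosen edge vectors. The key observation is that the $d$ edges $u_k:=v_{k+1}-v_1$ emanating from $v_1$ form a basis with $|\det(u_1,\dots,u_d)|=d!\,\vol_d(T)$, and that every edge is an integer combination of them, namely $v_i-v_j=u_{i-1}-u_{j-1}$ (with the convention $u_0:=0$). Consequently each $\det(V_S)$ factors as $c_S\cdot\bigl(\pm\,d!\,\vol_d(T)\bigr)$, where $c_S$ is the determinant of the integer matrix expressing the edges of $S$ in the basis $\{u_k\}$.

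I would then identify these coefficient matrices combinatorially: they are precisely the maximal square submatrices of the reduced signed incidence matrix of the complete graph $K_{d+1}$ on the vertex set $\{v_1,\dots,v_{d+1}\}$. By total unimodularity, $c_S\in\{-1,0,1\}$, with $c_S=\pm1$ exactly when the $d$ edges of $S$ form a spanning tree of $K_{d+1}$, and $c_S=0$ otherwise. Hence $\sum_S c_S^2$ counts the spanning trees of $K_{d+1}$, which by Cayley's formula equals $(d+1)^{d-1}$. This produces the clean closed form
\[
\det(S_E(T))=\bigl(d!\,\vol_d(T)\bigr)^2\,(d+1)^{d-1}.
\]

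To finish, I would substitute the trace and the determinant into~\eqref{T-D ineq} to obtain
\[
\bigl(d!\,\vol_d(T)\bigr)^2\,(d+1)^{d-1}\le\left(\frac{\|\vol_1(T)\|_{\ell_2}^2}{d}\right)^d,
\]
and then take square roots and rearrange to isolate $\vol_d(T)$, yielding~\eqref{S_E geometric ineq}. The only genuinely nontrivial ingredient is the spanning-tree count that pins down $\det(S_E(T))$; the trace computation and the final algebraic rearrangement are routine.
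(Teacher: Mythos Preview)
Your proposal is correct and follows essentially the same route as the paper: compute $\tr(S_E(T))=\|\vol_1(T)\|_{\ell_2}^2$, use Cauchy--Binet to write $\det(S_E(T))=c_d\,(d!\vol_d(T))^2$, identify $c_d=(d+1)^{d-1}$ via Cayley's formula, and apply~\eqref{T-D ineq}. You in fact supply more justification than the paper does, since your total-unimodularity argument explains \emph{why} every nonzero term in the Cauchy--Binet sum contributes exactly $(d!\vol_d(T))^2$ and why the count is precisely the number of spanning trees of $K_{d+1}$, whereas the paper simply asserts that $c_d$ equals the number of spanning sets and cites Cayley.
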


\begin{proof}
    For $T\in\mathcal{P}_d$, consider its edge frame $E=\left\{v_1,\dots,v_{\binom{d+1}{2}}\right\}$, with frame operator $S_{E}(T)$. We have $\tr(S_E(T))=\|\vol_1(T) \|_{\ell_2}^{2}$ by definition, and Lemma \ref{det S lemma} provides $\det(S_E(T))=c_d(d!\vol_d(T))^2$, where $c_d$ is the number of spanning sets of $\mathbb{R}^d$ formed by the $\binom{d+1}{2}$ edge vectors of $T$.  The formula $c_d=(d+1)^{d-1}$ for the coefficient is due to Cayley \cite{cayley_1878}.
\end{proof}

\begin{remark}
    In both Propositions \ref{thm: S_V simplex} and \ref{thm: S_E simplex}, the equality condition corresponds to the simplex $T$ producing a tight frame.  Perhaps unsurprisingly, for the regular simplex $\Delta$, $S_E(\Delta)$ and $S_V(\Delta)$ are tight frames.  The result for the vertex frame may be found in \cite{Fickus_Schmitt_2020}.  To see that the edge frame is tight, we note that Theorem \ref{thm: S_E simplex} follows from the Veljan-Korchm\'{a}ros inequality, \cite[Corollary 1]{veljan_1995_sine}
    \[
    \vol_d(T)\leq \frac{1}{d!}\sqrt{\frac{d+1}{2^d}}\prod_{e\in\mathcal{F}_1(T)}\vol_{1}(e)^{\frac{2}{d+1}},
    \]
    and applying the arithmetic mean geometric mean inequality to the right-hand side.  For both inequalities, equality occurs if and only if the simplex is regular.
\end{remark}

Combining Propositions \ref{thm: S_V simplex} and \ref{thm: S_V simplex} and applying H\"{o}lder's inequality yields the following theorem.

\begin{theorem}\label{thm: l_p}
    Suppose that $T\in\mathcal{P}_d$ is a simplex and $1\leq p \leq 2$, then
    \[
\|\vol_d(T) \|_{\ell_p}\leq \frac{(d+1)^{\frac{3d+1}{2}-\frac{2d}{p}}}{d!d^{\frac{d}{2}}}\|\vol_1(T) \|_{\ell_2}^{\frac{2}{p}-1},
\]
with equality if and only if $T=\Delta$.
\end{theorem}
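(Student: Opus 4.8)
The plan is to treat $\vol_d(T)=(\vol_d(T_j))_{j=1}^{d+1}$ as a fixed vector in $\mathbb{R}^{d+1}$ with strictly positive entries (since the origin lies in the interior of $T$, each subsimplex $T_j$ has positive volume) and to interpolate its $\ell_p$ norm between the two endpoints $p=1$ and $p=2$, where Propositions \ref{thm: S_E simplex} and \ref{thm: S_V simplex} respectively supply sharp bounds. The phrase ``combining Propositions \ref{thm: S_V simplex} and \ref{thm: S_E simplex} and applying H\"older's inequality'' makes precise that H\"older is exactly the interpolation device tying the two endpoint estimates together.

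First I would record the two endpoint estimates. At $p=1$ one uses $\|\vol_d(T)\|_{\ell_1}=\vol_d(T)$ and bounds the right-hand side by Proposition \ref{thm: S_E simplex}, obtaining control of the volume in terms of $\|\vol_1(T)\|_{\ell_2}$. At $p=2$, Proposition \ref{thm: S_V simplex} gives directly $\|\vol_d(T)\|_{\ell_2}\le \tfrac{1}{d!}(1+\tfrac1d)^{d/2}$, and I would rewrite $1+\tfrac1d=\tfrac{d+1}{d}$ to expose the powers of $d+1$ and $d$ that will appear in the final constant.

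The interpolation step is the heart of the argument. For $1\le p\le2$ write $\tfrac1p=\alpha+\tfrac{\beta}{2}$ with $\alpha+\beta=1$, so that $\alpha=\tfrac2p-1$ and $\beta=2-\tfrac2p$. Applying H\"older's inequality to the splitting $|a_j|^p=|a_j|^{p\alpha}\cdot|a_j|^{p\beta}$ with conjugate exponents $r=1/(p\alpha)$ and $s=2/(p\beta)$ (these satisfy $1/r+1/s=1$ precisely because $\alpha+\tfrac{\beta}{2}=\tfrac1p$) yields the log-convexity bound
\[
\|\vol_d(T)\|_{\ell_p}\le \|\vol_d(T)\|_{\ell_1}^{\,\alpha}\,\|\vol_d(T)\|_{\ell_2}^{\,\beta}.
\]
Substituting the two endpoint estimates and then collecting the powers of $d!$, $d$, and $d+1$ produces a bound of the asserted form, with the edge contribution carried entirely by $\|\vol_1(T)\|_{\ell_2}$ through the $p=1$ term.

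Finally, for the equality analysis I would track the three inequalities used. Both propositions are equalities exactly when $T=\Delta$, and equality in the H\"older step forces $|a_j|^{p\alpha r}\propto|a_j|^{p\beta s}$, i.e. $|a_j|\propto|a_j|^2$, so that all entries of $\vol_d(T)$ coincide. By symmetry the regular simplex equalizes the subvolumes $\vol_d(T_j)$, so this condition is consistent with $T=\Delta$; hence for $1<p<2$ all three conditions hold simultaneously iff $T=\Delta$, while the endpoints $p=1,2$ reduce directly to the equality cases of Propositions \ref{thm: S_E simplex} and \ref{thm: S_V simplex}. I expect the main obstacle to lie not in the inequality itself but in this equality bookkeeping: one must confirm that the tightness condition of the interpolation (all $\vol_d(T_j)$ equal) is compatible with, and implied by, the common equality case $T=\Delta$ of the two propositions, so that no strictly larger extremal family is admitted.
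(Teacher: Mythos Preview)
Your approach is exactly the one the paper indicates: the paper's entire proof is the single sentence ``Combining Propositions~\ref{thm: S_V simplex} and~\ref{thm: S_E simplex} and applying H\"older's inequality,'' and your log-convexity interpolation
\[
\|\vol_d(T)\|_{\ell_p}\le \|\vol_d(T)\|_{\ell_1}^{\,2/p-1}\,\|\vol_d(T)\|_{\ell_2}^{\,2-2/p}
\]
followed by the two endpoint bounds is precisely that. One caution: you wave your hands at ``collecting the powers,'' but if you actually carry out the bookkeeping you will find the exponent on $\|\vol_1(T)\|_{\ell_2}$ comes out as $d\bigl(\tfrac{2}{p}-1\bigr)$, not $\tfrac{2}{p}-1$, with a matching adjustment to the power of $d+1$; checking the equality case $T=\Delta$ (where $\|\vol_1(\Delta)\|_{\ell_2}=d+1$) shows that your computed exponents are the sharp ones and the displayed constants in the statement contain a typo.
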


%%%%%%%%%%%%
%%%%%%%%%%%% Normal Frame Inequalities for the Simplex
%%%%%%%%%%%%
\section{Normal Frame Inequalities for the Simplex}

Let $T\in\mathcal{P}_d$, for $F\in\mathcal{F}_{d-1}(T)$, we let $n_F$ denote the outward normal vector of $F$ such that $\|n_F \|_{\ell_2}=\vol_{d-1}(F)$ and $\hat{n}_F$ the corresponding unit vector.  In this section, we consider the following frames for $\mathbb{R}^d$
\begin{align*}
    \widehat{N}(T)&=\{\widehat{n}_F: F\in\mathcal{F}_{d-1}(T)\}\quad\text{ and }\\
    N(T)&=\{\vol_{d-1}(F)\widehat{n}_F: F\in\mathcal{F}_{d-1}(T)\}.
\end{align*}
The convex hull of the corresponding vectors is called the \emph{unit normal simplex} and the \emph{normal simplex}, respectively.

\begin{comment}

Applying \eqref{T-D ineq} to these frames yields the following.
\begin{theorem}\label{thm: N1 ineq}
Let $T\in\mathcal{P}_d$ and $p\in \Int(T)$.  Then we have the weighted norm inequality
\[
\| \vol_{d}(\widehat{N}_T) \|_{\ell_2,w(p)}\leq \dfrac{1}{d!}\left(\frac{1}{d}\sum_{j=1}^{d+1}h_j^2\right)^{\frac{d}{2}},
\]
where the weights are given by $w_j(p)=\displaystyle\prod_{i\neq j} h_i^2$.
\end{theorem}
\begin{proof}
Apply the argument given in Theorem \ref{thm: S_V simplex} to $N_1(T,p)$.
\end{proof}
\begin{remark}
If we take $p=i_T$, the incenter of $T$, we recover Theorem \ref{thm: S_V simplex} for $\widehat{N}_T$.
\end{remark}

\begin{remark}
    If we take $p=0$, then we may write
    \[
    \|\vol_{d}(N_T) \|_{\ell_2} \leq \dfrac{1}{d!}\left(\frac{1}{d} \sum_{j=1}^{d+1}h_j^2\right)^{\frac{d}{2}}=\dfrac{d^{\frac{d}{2}}}{d!}\|\vol_d(T) \|_{\ell_2,w}^d,
    \]
    where $N_T=\conv(N_1(T,0))\subset T$ and the weights are given by $w_j=\vol_{d-1}(F_j)^{-2}$.
\end{remark}

\begin{theorem}\label{thm: N2 ineq}
    For $T\in\mathcal{P}_d$, let $N'_T=\conv(N_2(T))$, then we have
    \[
    \|\vol_d(N_T') \|_{\ell_2} \leq \frac{1}{d!d^{\frac{d}{2}}}\| \vol_{d-1}(T) \|_{\ell_2}^{d}.
    \]
\end{theorem}
\begin{proof}
    Apply the argument given in Theorem \ref{thm: S_V simplex} to $N_2(T)$.
\end{proof}

\end{comment}

We begin with a local result.  For a fixed vertex $v\in T$ and $1\leq j\leq d$, define
\[
\mathcal{F}_{j}(T,v):=\left\{ F\in\mathcal{F}_j(T): v\in F \right\}.
\]

\begin{lemma}\label{lem: local vol}
    Suppose $T\in\mathcal{P}_d$ is a simplex.  For all vertices $v$, we have 
    \[
    d!\vol_{d}\left(\conv(n_F:F\in\mathcal{F}_{d-1}(T,v))\right)=(d!\vol_{d}\left(\conv(\mathcal{F}_{1}(T,v))\right))^{d-1}=(d!\vol_d(T))^{d-1}.
    \]
\end{lemma}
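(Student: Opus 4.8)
The plan is to collapse the statement into a single determinant identity obtained by pairing the $d$ facet normals through $v$ against the $d$ edges through $v$. First I would normalize the configuration: translate so that $v=v_0$, where $v_0,\dots,v_d$ are the vertices of $T$, and set $e_i=v_i-v_0$ for $1\le i\le d$, so that the matrix $E=[e_1\ \cdots\ e_d]$ satisfies $|\det E|=d!\vol_d(T)$. Label the $d$ facets containing $v$ as $F_1,\dots,F_d$, where $F_i$ is the facet opposite $v_i$. The middle equality is then immediate, since the edges in $\mathcal F_1(T,v)$ have endpoints $v_0$ together with all remaining vertices; hence $\conv(\mathcal F_1(T,v))=T$ and the two have equal $d$-volume.

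The engine of the proof is the orthogonality relation $\langle n_{F_i},e_j\rangle=-d\,\vol_d(T)\,\delta_{ij}$. For $j\neq i$ the edge $e_j$ lies inside the facet $F_i$, so the pairing vanishes. For $j=i$, the edge $e_i$ runs from the point $v_0\in F_i$ to the opposite vertex $v_i$, so its component along the outward unit normal $\widehat n_{F_i}$ is $-h_i$, the negative of the height of $T$ measured from $v_i$; multiplying by $\|n_{F_i}\|=\vol_{d-1}(F_i)$ and using $\vol_d(T)=\tfrac1d\vol_{d-1}(F_i)h_i$ gives the claimed value. In matrix form this says $N^{T}E=-d\,\vol_d(T)\,I_d$, where $N=[n_{F_1}\ \cdots\ n_{F_d}]$.

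Taking determinants and inserting $|\det E|=d!\vol_d(T)$ yields
\[
|\det N|=\frac{(d\,\vol_d(T))^{d}}{d!\,\vol_d(T)},
\]
which is a fixed scalar multiple of $\vol_d(T)^{d-1}$; the crucial exponent $d-1$ appears precisely because one of the $d$ powers of the volume in $(\det N)(\det E)$ is absorbed by $\det E$. (Equivalently, each $n_{F_i}$ is a scalar multiple of the generalized cross product of the edges $e_j$, $j\neq i$, so $N$ is proportional to the transpose of the adjugate of $E$; the identity $\det(\text{adj }E)=(\det E)^{d-1}$ then reproduces the same exponent.) Interpreting $d!\vol_d(\conv(n_F:F\in\mathcal F_{d-1}(T,v)))$ as $|\det N|$ — that is, forming the hull of the $d$ normals together with the origin, the convention underlying Lemma \ref{det S lemma} and Proposition \ref{thm: S_V simplex} — and combining with the middle equality delivers the stated identity once the scalar is reconciled with the right-hand side.

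I expect the main obstacle to be the careful bookkeeping at the two ends of this chain rather than any deep difficulty. Concretely: getting the sign and magnitude of the diagonal entry $\langle n_{F_i},e_i\rangle$ right, since this is where the factor $d\,\vol_d(T)$, and ultimately the power $d-1$, originates; and making explicit that the convex hull of the $d$ facet normals is taken with the origin adjoined, so that a $(d-1)$-dimensional point set becomes a genuine $d$-simplex. I would verify the resulting scalar against the cases $d=2$ and $d=3$ to confirm that the normalization of $n_F$ matches the right-hand side exactly.
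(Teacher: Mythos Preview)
Your approach is essentially the paper's: both arguments reduce the claim to the matrix identity $NE=cI$ (you obtain $c$ via the base--height formula, the paper via the cone-volume formula, which is the same computation) and then take determinants, and your adjugate remark is the coordinate-free version of the paper's Hodge-star description $n_j=\star\bigl(\bigwedge_{k\ne j}e_{i,k}\bigr)$. Your caution about the scalar is well placed: with the Section~4 normalization $\|n_F\|=\vol_{d-1}(F)$ your computation gives $|\det N|=d^{d}\vol_d(T)^{d-1}/d!$, whereas the paper's proof tacitly uses $\|n_j\|=(d-1)!\,\vol_{d-1}(F_j)$ (the parallelotope rather than simplex volume), which is precisely the rescaling that makes the stated constant $(d!\vol_d(T))^{d-1}$ emerge --- your proposed $d=3$ check would reveal this.
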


\begin{proof}
The argument for $\mathbb{R}^2$ relies on elementary trigonometry.  Suppose there is an angle $\theta$ at some vertex.  Since the sum of the internal angles of a quadrilateral is $2\pi, $ moving the normal vectors into standard position yields a central angle of $\pi-\theta$, as shown below.

\begin{center}
    \begin{tikzpicture}[scale=2]
        % Draw the scaled unit circle with radius 2
        \draw[black] (0,0) circle (1);
        
        % Define triangle vertices
        \coordinate (A) at (0,1);
        \coordinate (B) at ({cos(240)},{sin(240)});
        \coordinate (C) at ({cos(340)},{sin(340)});

        %draw triangle
        \draw[black] (A) -- (B);
        \draw[black] (B) -- (C);
        \draw[black] (C) -- (A);

        % normal vectors
        \coordinate (n1) at ({sin(240)-1},{-cos(240)});
        \coordinate (n2) at ({-sin(240)+sin(340)},{cos(240)-cos(340)});
        \coordinate (n3) at ({1-sin(340)},{cos(340)});

        % \draw[red,dashed,->] (0,0) -- (n1);
        \draw[red,dashed,->] (0,0) -- (n2);
        \draw[red,dashed,->] (0,0) -- (n3);
        \draw[black,dotted] (n2) -- (n3);

        %labels
        \draw ($(C)-(.05,-.05)$) node[left] {\tiny$\theta$};
        \draw ($(0,0) $) node[right] {\tiny$\pi-\theta$};

    \end{tikzpicture}
 
\end{center}
If we denote the normal vector that intersects $e_j$ by $n_j$, then $\|n_j\|=\|e_j\|$.  The area of the \emph{normal triangle} is given by $A=\frac{1}{2}\| n_1\| \| n_2\|\sin(\pi-\theta)=\frac{1}{2}\| e_1\| \| e_2\|\sin\theta$, where the edges $e_1$ and $e_2$ meet at the chosen vertex.

\begin{comment}

In $\mathbb{R}^3$, the result follows from the scalar triple product.  We use the wedge notation for the cross product to set up the high  Suppose that the tetrahedron is given by $T=\conv(v_1,\dots,v_4)$ and let $e_{ij}$ denotes the edge connecting $v_i$ to $v_j$.  Let $n_1=e_{12}\wedge e_{13}, n_2=e_{13}\wedge e_{14},$ and $n_3=e_{14}\wedge e_{12}$ be the outward facing normals of the facets containing $v_1$.  Then we have
\begin{align*}
    \begin{vmatrix}
        n_1&n_2&n_3
    \end{vmatrix}&=|n_1\wedge n_2\wedge n_3|\\
    %&=|n_1\wedge(n_2\wedge n_3)|\\
    &=|(e_{12}\wedge e_{13})\cdot(n_2\wedge n_3)|\\
    &=|(e_{12}\cdot n_2)(e_{13}\cdot n_3)-(e_{12}\cdot n_3)(e_{13}\cdot n_2)|\\
    &=|(e_{12}\cdot(e_{13}\wedge e_{14}))(e_{13}\cdot(e_{14}\wedge e_{12}))-0|\\
    &=\begin{vmatrix}
        e_{12}& e_{13}&e_{14}
    \end{vmatrix}^2
\end{align*}

\end{comment}

For higher dimensions, denote by $F_j$ the facet not containing vertex $v_j$, whose corresponding outward normal vector is $n_j$.  If we restrict our attention to a specific vertex, say $v_i$, then the facets $F_j$, where $j\neq i$ contain $v_i$, hence the corresponding normal frame consists of the vectors $n_j=\star(\bigwedge_{k\neq j}e_{i,k})$, where $\star$ denotes the Hodge star.  For a fixed $i$, let $N_i(T)$ denote the matrix whose rows are given by $n_j$ and let $E$ denote the matrix whose columns are given by $e_{i,j}$.  Then $N_i(T)E=cI$, where the diagonal entries are $c=(d-1)!\vol_{d-1}(\conv(\{e_{i,k}:k\neq j\}))e_{i,j}\cdot n_j=d!\vol_d(T)$, where the second equality comes from the cone volume formula.  Hence we have $\det(N_T)=(d!\vol_d(T))^{d-1}$.  
\end{proof}

\begin{comment}\begin{align*}
    |n_1\cdot(n_2\times n_3)|&=|(e_{12}\times e_{13})\cdot(n_2\times n_3)|\\
    &=|(e_{12}\cdot n_2)(e_{13}\cdot n_3)-(e_{12}\cdot n_3)(e_{13}\cdot n_2)|\\
    &=|(e_{12}\cdot (e_{13}\times e_{14}))(e_{13}\cdot (e_{14} \times e_{12}))-0|\\
    &=|e_{12}\cdot(e_{13}\times e_{14})|^2.
\end{align*}
\end{comment}

\begin{corollary}\label{cor: vol N_T}
    Suppose that $T\in \mathcal{P}_d$ is a simplex.  For the normal simplex $N_T$, we have  
    \[
    \vol_{d}(N_T)=(d+1)d!^{d-2}\vol_d(T)^{d-1}.
    \]
\end{corollary}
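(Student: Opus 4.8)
The plan is to realize $N_T$ as a $d$-simplex whose $d+1$ vertices are the area-weighted outward normals $n_1,\dots,n_{d+1}$, and to compute its volume by coning from the origin. The essential structural input is Minkowski's closing relation $\sum_{F\in\mathcal{F}_{d-1}(T)} n_F=0$, which holds because $n_F=\vol_{d-1}(F)\widehat{n}_F$ and the area-weighted outward normals of a closed polytopal surface sum to zero (integrate the constant vector field, or apply the divergence theorem). Consequently the origin equals the vertex centroid $\frac{1}{d+1}\sum_j n_j$ of $N_T$, and in particular lies in its interior.

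Since the origin is interior to the $d$-simplex $N_T=\conv(n_1,\dots,n_{d+1})$, the star (fan) triangulation from $0$ partitions $N_T$ into $d+1$ sub-simplices with pairwise disjoint interiors, one cone over each facet, so their $d$-volumes add up to $\vol_d(N_T)$. The facet of $N_T$ opposite the vertex $n_i$ is $\conv(n_j:j\neq i)$, so the corresponding cone is the $d$-simplex $\conv(\{0\}\cup\{n_j:j\neq i\})$.

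I would then match each cone to Lemma \ref{lem: local vol}. Indexing facets so that $F_j$ is opposite vertex $v_j$ with outward normal $n_j$, the facets through $v_i$ are exactly $\{F_j:j\neq i\}$; thus $\conv(\{0\}\cup\{n_j:j\neq i\})$ is precisely the local normal simplex whose volume is evaluated in Lemma \ref{lem: local vol} (its proof computes $\det N_i(T)=(d!\vol_d(T))^{d-1}$, i.e.\ $d!$ times this volume). Hence for every $i$,
\[
\vol_d\big(\conv(\{0\}\cup\{n_j:j\neq i\})\big)=\frac{1}{d!}\big(d!\vol_d(T)\big)^{d-1}.
\]
Summing the $d+1$ equal contributions yields
\[
\vol_d(N_T)=(d+1)\cdot\frac{1}{d!}\big(d!\vol_d(T)\big)^{d-1}=(d+1)\,d!^{\,d-2}\vol_d(T)^{d-1},
\]
as claimed.

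The main obstacle is justifying that the origin lies in the interior of $N_T$, since only then do the cone volumes add without signed cancellation; this is exactly where Minkowski's relation enters, and it is worth stating explicitly rather than taking for granted. A secondary, purely bookkeeping point is the identification of each cone $\conv(\{0\}\cup\{n_j:j\neq i\})$ with the local normal simplex of Lemma \ref{lem: local vol}, so that the already-computed local volume can be reused verbatim; once the indexing is set up, no new geometry is required.
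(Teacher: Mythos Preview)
Your proof is correct and is precisely the argument the paper has in mind: the corollary is stated immediately after Lemma~\ref{lem: local vol} with no separate proof, and the intended derivation is exactly the cone-from-the-origin decomposition into the $d+1$ local normal simplices whose volumes Lemma~\ref{lem: local vol} has already computed. Your explicit invocation of the Minkowski relation $\sum_j n_j=0$ to place the origin at the centroid of $N_T$ (hence in its interior, so the cone volumes sum without cancellation) supplies a justification the paper leaves tacit, and is a welcome addition.
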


Applying the trace determinant inequality to $S_V(N_T)$ yields the following.
\begin{prop}\label{prop: N_T ineq}
Suppose that $T\in\mathcal{P}_d$ is a simplex.  For the normal simplex, we have
\[
\| \vol_{d}(N_T) \|_{\ell_2}\leq \frac{1}{d!d^{\frac{d}{2}}}\| \vol_{d-1}(T)\|_{\ell_2}^{d}. 
\]
\end{prop}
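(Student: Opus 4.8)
The plan is to mirror the proof of Proposition \ref{thm: S_V simplex}, applied to the vertex frame of the normal simplex $N_T$ rather than to $T$ itself. Write the normal frame as $N(T)=\{n_1,\dots,n_{d+1}\}$, where $n_j=\vol_{d-1}(F_j)\hat{n}_{F_j}$ and $F_j$ is the facet of $T$ opposite the vertex $v_j$; these vectors are precisely the vertices of $N_T$, so $S_V(N_T)=S_N(T)$. Since $T$ is full-dimensional, any $d$ of its facet normals are linearly independent, so $N(T)$ spans $\mathbb{R}^d$ and is a genuine frame (indeed the Minkowski relation $\sum_j n_j=0$ places the origin at the centroid of $N_T$).

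First I would compute the trace. Because the frame operator satisfies $\tr(S)=\sum_j\|n_j\|_{\ell_2}^2$, and by construction $\|n_j\|_{\ell_2}=\vol_{d-1}(F_j)$, we obtain
\[
\tr(S_N(T))=\sum_{j=1}^{d+1}\vol_{d-1}(F_j)^2=\|\vol_{d-1}(T)\|_{\ell_2}^2.
\]
Next I would compute the determinant via the Cauchy-Binet formula (Lemma \ref{det S lemma}). Selecting $d$ of the $d+1$ normal vectors amounts to deleting a single vertex $n_j$, so
\[
\det(S_N(T))=\sum_{j=1}^{d+1}\det(N_j)^2,
\]
where $N_j$ is the $d\times d$ matrix whose columns are the normals other than $n_j$. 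As in Proposition \ref{thm: S_V simplex}, $\det(N_j)=d!\,\vol_d((N_T)_j)$, where $(N_T)_j$ is the simplex obtained from $N_T$ by replacing the vertex $n_j$ with the origin; hence $\det(S_N(T))=(d!)^2\,\|\vol_d(N_T)\|_{\ell_2}^2$, using the same aggregated-volume notation as in Proposition \ref{thm: S_V simplex} (note this is the $\ell_2$ norm of the sub-simplex volumes, not the single volume computed in Corollary \ref{cor: vol N_T}).

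Finally, feeding these two quantities into the trace-determinant inequality \eqref{T-D ineq} gives
\[
(d!)^2\,\|\vol_d(N_T)\|_{\ell_2}^2\leq\left(\frac{\|\vol_{d-1}(T)\|_{\ell_2}^2}{d}\right)^d,
\]
and taking square roots followed by division by $d!$ yields the claim. I expect no serious obstacle: the argument is structurally identical to Proposition \ref{thm: S_V simplex}, with the only genuine novelty being the trace computation, which differs because the vertices of $N_T$ lie at radius $\vol_{d-1}(F_j)$ rather than on the unit sphere. The single point requiring care is the Cauchy-Binet bookkeeping, namely confirming that the sum of squared $d\times d$ minors of the synthesis matrix of $N(T)$ equals $(d!)^2$ times the $\ell_2$-aggregate of the volumes of the sub-simplices of $N_T$ obtained by sending each vertex to the origin—exactly the identification already established for the vertex frame.
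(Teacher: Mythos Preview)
Your proposal is correct and matches the paper's approach exactly: the paper simply states that the result follows by applying the trace--determinant inequality to $S_V(N_T)$, and you have carried out precisely that computation, with $\tr(S_V(N_T))=\|\vol_{d-1}(T)\|_{\ell_2}^2$ and $\det(S_V(N_T))=(d!)^2\|\vol_d(N_T)\|_{\ell_2}^2$ via Cauchy--Binet.
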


This allows us to prove a version of the isoperimetric inequality.

\begin{theorem}\label{thm: simplex iso ineq}
    Suppose that $T\in\mathcal{P}_d$, then
    \[
     \sqrt{d+1}d!^{d-1}d^{\frac{d}{2}}  \leq \frac{\|\vol_{d-1}(T) \|_{\ell_2}^{d}}{\vol_d(T)^{d-1}}.
    \]
\end{theorem}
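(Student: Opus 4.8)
The plan is to combine the two displayed facts that immediately precede the statement, namely the upper bound of Proposition \ref{prop: N_T ineq} and the exact volume formula of Corollary \ref{cor: vol N_T}, with a single $\ell_1$--$\ell_2$ comparison applied to the normal simplex $N_T$. (Since both inputs are stated for simplices, I read the hypothesis as: $T\in\mathcal{P}_d$ is a simplex, with $N_T=\conv\{n_{F_1},\dots,n_{F_{d+1}}\}$.) Proposition \ref{prop: N_T ineq} already controls from above the $\ell_2$-norm of the vector of sub-simplex volumes of $N_T$ by $\|\vol_{d-1}(T)\|_{\ell_2}^{d}$, while Corollary \ref{cor: vol N_T} records the scalar volume $\vol_d(N_T)$; the whole game is to bridge these two.

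First I would install the bridge. The Minkowski closing relation for $T$, $\sum_{F\in\mathcal{F}_{d-1}(T)}\vol_{d-1}(F)\,\hat n_F=0$, says precisely that $n_{F_1}+\cdots+n_{F_{d+1}}=0$, so the origin is the centroid $\frac{1}{d+1}\sum_j n_{F_j}$ of the vertices of $N_T$; since the normals span $\mathbb{R}^d$ (they form the frame $N(T)$), $N_T$ is full-dimensional and the origin lies in its interior. Exactly as in the passage from Proposition \ref{thm: S_V simplex} to Corollary \ref{cor: vol bound}, this means $N_T$ is subdivided (up to a null set) into the $d+1$ sub-simplices obtained by replacing each vertex by the origin, whence $\vol_d(N_T)=\|\vol_d(N_T)\|_{\ell_1}$. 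Applying Cauchy--Schwarz to this vector of $d+1$ nonnegative entries then gives $\vol_d(N_T)\le\sqrt{d+1}\,\|\vol_d(N_T)\|_{\ell_2}$.

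With the bridge in place the argument is pure bookkeeping. Chaining the last inequality with Proposition \ref{prop: N_T ineq} yields
\[
\vol_d(N_T)\le\sqrt{d+1}\,\|\vol_d(N_T)\|_{\ell_2}\le\frac{\sqrt{d+1}}{d!\,d^{d/2}}\,\|\vol_{d-1}(T)\|_{\ell_2}^{d}.
\]
Substituting $\vol_d(N_T)=(d+1)\,d!^{\,d-2}\vol_d(T)^{d-1}$ from Corollary \ref{cor: vol N_T} and solving for the ratio $\|\vol_{d-1}(T)\|_{\ell_2}^{d}/\vol_d(T)^{d-1}$, the constant collapses via $(d+1)/\sqrt{d+1}=\sqrt{d+1}$ and $d!^{\,d-2}\cdot d!=d!^{\,d-1}$ to the asserted $\sqrt{d+1}\,d!^{\,d-1}d^{d/2}$.

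The only step requiring genuine justification is locating the origin in the interior of $N_T$, which I expect to be the main obstacle and which I would settle through the closing relation as above; everything else is the constant-tracking already rehearsed for Corollary \ref{cor: vol bound}. As a consistency check worth noting, Lemma \ref{lem: local vol} shows that all $d+1$ sub-simplex volumes of $N_T$ are in fact equal, so the Cauchy--Schwarz step above is an exact equality; this is reassuring, since it means no slack is introduced beyond that already present in the trace-determinant inequality underlying Proposition \ref{prop: N_T ineq}.
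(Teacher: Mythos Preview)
Your proposal is correct and follows the paper's approach: the paper's own proof is the one-liner ``Combine Corollary~\ref{cor: vol N_T} with Proposition~\ref{prop: N_T ineq},'' and you have simply made explicit the bridge between $\vol_d(N_T)$ and $\|\vol_d(N_T)\|_{\ell_2}$ that this requires. Your closing observation that Lemma~\ref{lem: local vol} forces all $d+1$ sub-simplex volumes of $N_T$ to coincide is exactly the point: it gives $\|\vol_d(N_T)\|_{\ell_2}=\vol_d(N_T)/\sqrt{d+1}$ directly, so the Cauchy--Schwarz step and the Minkowski/centroid verification you carry out are not strictly needed (though they are correct), and the paper presumably has this equality in mind when it says ``combine.''
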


\begin{proof}
    Combine Proposition Corollary \ref{cor: vol N_T} with Proposition \ref{prop: N_T ineq}.
\end{proof}

\begin{comment}

We could iterate Theorem \ref{thm: simplex iso ineq} to get lower dimensional simplices.  In $\mathbb{R}^3$, we have the following result.

\begin{corollary}\label{cor: R3 iso ineq}
    Suppose that $T\in\mathcal{P}_3$, then
    \[
    \vol_{3}(T)\leq \frac{\sqrt[4]{2}}{144\sqrt{6}}\left\langle \mathbf{e}   , A\mathbf{e}   \right\rangle^{\frac{3}{4}},
    \]
    where $\mathbf{e}$ is the vector of squared edge lengths and 
    \[
    A=\begin{bmatrix}
        2 & 1 & 1 & 1 & 1 & 0\\
        1 & 2 & 1 & 1 & 0 & 1\\
        1 & 1 & 2 & 0 & 1 & 1\\
        1 & 1 & 0 & 2 & 1 & 1\\
        1 & 0 & 1 & 1 & 2 & 1\\
        0 & 1 & 1 & 1 & 1 & 2 
    \end{bmatrix}.
    \]
\end{corollary}
\end{comment}

\begin{comment}
\begin{corollary}
    Suppose that $Q\in \mathcal{P}_d$ with $n$ vertices and let $N_Q$ be the polytope whose vertices are given by the outward surface normals of $Q$.  Then  
    \[
    \vol_{d}(N_Q)=(n+1)\vol_d(Q).
    \]
\end{corollary}
\end{comment}

Applying Proposition \ref{thm: S_E simplex} to $\widehat{N}_T$ yields the following result.
\begin{prop}
    Suppose $T\in\mathcal{P}_d$, then
    \[
    \vol_d(\widehat{N}_T)\leq \dfrac{\|\vol_1(\widehat{N}_T)\|_{\ell_2}^{d}}{d!(d+1)^\frac{d-1}{2}d^{\frac{d}{2}}}.
    \]
\end{prop}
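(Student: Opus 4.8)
The plan is to observe that this proposition is nothing more than Proposition \ref{thm: S_E simplex} applied to the unit normal simplex $\widehat{N}_T$ in place of $T$; the entire substance of the argument is the verification that $\widehat{N}_T$ is itself a legitimate member of $\mathcal{P}_d$. Recall that $\widehat{N}_T=\conv\{\widehat{n}_F : F\in\mathcal{F}_{d-1}(T)\}$ is the convex hull of the $d+1$ outward unit facet normals. Since each $\widehat{n}_F$ is a unit vector, the vertices of $\widehat{N}_T$ lie on $\mathbb{S}^{d-1}$, so the inscribed-in-the-sphere condition is automatic and it remains to check that $\widehat{N}_T$ is a genuine $d$-simplex of nonzero volume with the origin in its interior.

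First I would invoke the Minkowski closing relation $\sum_{F}\vol_{d-1}(F)\,\widehat{n}_F=0$. Dividing through by $\sum_{F}\vol_{d-1}(F)>0$ exhibits the origin as a convex combination of the $\widehat{n}_F$ with strictly positive coefficients. Next I would record that any $d$ of the $d+1$ unit normals are linearly independent: this is precisely the relation $N_i(T)E=cI$ appearing in the proof of Lemma \ref{lem: local vol}, which says the normals at a fixed vertex are scalar multiples of the basis dual to the edges emanating from that vertex. In particular the $\widehat{n}_F$ linearly span $\mathbb{R}^d$.

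The combination of these two facts forces affine independence. Indeed, the linear map $(c_F)\mapsto\sum_{F}c_F\widehat{n}_F$ from $\mathbb{R}^{d+1}$ to $\mathbb{R}^d$ has rank $d$, hence a one-dimensional kernel; that kernel is spanned by the strictly positive weight vector $(\vol_{d-1}(F))_F$, whose coordinate sum is nonzero. Consequently the only solution of $\sum_{F}c_F\widehat{n}_F=0$ subject to $\sum_{F}c_F=0$ is the trivial one, which is exactly \emph{affine} independence of the $\widehat{n}_F$. Therefore $\widehat{N}_T$ is a full-dimensional $d$-simplex, and since the origin is a strict convex combination of its vertices it lies in the interior. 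This establishes $\widehat{N}_T\in\mathcal{P}_d$.

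With membership secured, the proof concludes by applying Proposition \ref{thm: S_E simplex} verbatim to the simplex $\widehat{N}_T$, whose edges are the segments joining pairs of unit normals, so that the right-hand side becomes exactly $\|\vol_1(\widehat{N}_T)\|_{\ell_2}^{d}$ divided by $d!(d+1)^{\frac{d-1}{2}}d^{\frac{d}{2}}$. I expect the only real obstacle to be the non-degeneracy check of the previous paragraph; everything else is either immediate from the unit-vector normalization or already contained in Lemma \ref{lem: local vol}.
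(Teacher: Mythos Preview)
Your proposal is correct and matches the paper's approach exactly: the paper simply states ``Applying Proposition~\ref{thm: S_E simplex} to $\widehat{N}_T$ yields the following result'' with no further justification, so you have in fact supplied more than the paper does by carefully verifying $\widehat{N}_T\in\mathcal{P}_d$ via the Minkowski relation and the dual-basis argument from Lemma~\ref{lem: local vol}. The non-degeneracy check you flag as the only real obstacle is indeed the only substantive content, and your argument for it is sound.
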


Shifting our focus to the unit normal frame, we may compute the volume of the (unit) normal cone along the edge $e$, denoted $NC(e,T)$.  Suppose that $e$ is incident to the $d-1$ facets $F_{e,1},\dots,F_{e,{d-1}}$ and denote the corresponding normal vectors by $n_{j}$, we adjust Lemma \ref{lem: local vol} by replacing a unit normal vector with the edge $e$ and computing the determinant, this yields
\[
\vol_{d-1}(NC(e_,T))=\|\star(n_{1}\wedge\cdots\wedge n_{{d-1}}) \|= \dfrac{(d!\vol_d(T))^{d-2}}{(d-1)!}\| e \|.
\]
\begin{remark}
In certain cases, we can use the method laid out in \cite{ribando_2006} to calculate the solid angle of the normal cone.  This would allow us to compute the external angle $\gamma(e)$.
\end{remark}

\begin{remark}
Concerning the external angle, in $\mathbb{R}^3$, we may use the techniques developed in \cite{bilyk_2023} (which extend those in \cite{cahill_2023}) applied to the unit normal frame to prove an analog of Corollary 3.3 found there
\[
    \Gamma_s(T):=\sum_{e\in\mathcal{F}_1(T)}\gamma(e)^s\leq \sum_{e\in \mathcal{F}_1(\triangle)}\gamma(e)^s
\]
for all $0<s\leq 1$. 
\end{remark}

\section{Polytopes with $d+2$ vertices}
Moving one vertex up from simplices, we have polytopes with $d+2$ vertices in $\mathbb{R}^d$.  Our strategy will be to decompose these polytopes into the union of simplices and use Lemma \ref{det S lemma} to account for duplication.  This means in general that our $\|\vol_d(Q) \|_{\ell_2}=\|\vol_d(Q) \|_{\ell_2,p}$, where $p\in Q$ is chosen appropriately.  We will prove results analogous to Theorems \ref{thm: S_V simplex} and \ref{thm: S_E simplex}.  In order to accomplish this, we complete the edge graph of the polytope by adding appropriate edges.  We denote this added content by $Q^o$, thus $\| \vol_1(Q^o)\|_{\ell_2}^{2}$ gives the sum of the squares of the lengths of the added edges.  In order to illustrate our methods, we will begin with $\mathbb{R}^2$ and $\mathbb{R}^3$.

%%%%%%%%%%%
%%%%%%%%%%% quadrilaterals
%%%%%%%%%%%
\subsection{Quadrilaterals}

In $\mathbb{R}^2$, the polytopes in question are quadrilaterals. We will consider the frame $S_E(Q\cup Q^o)$ generated by the edge vectors of $Q$, $\{e_j:1\leq j \leq 4\}$, and the two diagonals, $\{e_i^o,e_{ii}^o\}$.  The intersection of the diagonals is a natural choice for $p$.  For a quadrilateral $Q\in\mathcal{P}_2$, we label the vertices $v_1,\dots,v_4$ the edges $e_1,\dots,e_4$, and the regions bounded by the diagonals and a particular edge are labeled $T_1,\dots,T_4$, as shown in the figure below.

\begin{center}
     \begin{tikzpicture}[every node/.append style={transform shape}, scale=0.5]]
            \draw[very thick](0,0) circle (4);
        %vertices
            \filldraw ({4*cos(0)},{4*sin(0)}) circle (3pt);
            \filldraw ({4*cos(115)},{4*sin(115)}) circle (3pt);
            \filldraw ({4*cos(130)},{-4*sin(130)}) circle (3pt);
            \filldraw ({4*cos(300)},{4*sin(300)}) circle (3pt);
            \draw ({4.2*cos(0)},{4.2*sin(0)}) node[right] {$v_1$};
            \draw ({4.2*cos(115)},{4.2*sin(115)}) node[above] {$v_2$};
            \draw ({4.2*cos(130)},{-4.2*sin(130)}) node[left] {$v_3$};
            \draw ({4.2*cos(300)},{4.2*sin(300)}) node[below] {$v_4$};
        % edges
            \draw[thick] ({4*cos(0)},{4*sin(0)}) -- ({4*cos(115)},{4*sin(115)})-- ({4*cos(130)},{-4*sin(130)})-- ({4*cos(300)},{4*sin(300)})--({4*cos(0)},{4*sin(0)});
            \draw (1.5,2) node[above] {$e_1$};
            \draw (-2.5,0) node[left] {$e_2$};
            \draw (-.5,-3.4) node[below] {$e_3$};
            \draw (3,-1.75) node[right] {$e_4$};
        %diagonals
            \draw[dotted,red,thick] ({4*cos(0)},{4*sin(0)})--({4*cos(130)},{-4*sin(130)});
            \draw[dotted, red,thick] ({4*cos(115)},{4*sin(115)}) -- ({4*cos(300)},{4*sin(300)});
        %area regions
            \draw (1.5,0) node[right] {$T_1$};
            \draw (-1.5,0) node[right] {$T_2$};
            \draw (0,-2.25) node[right] {$T_3$};
            \draw (1.75,-1.75) node[right] {$T_4$};
    \end{tikzpicture}
\end{center}

We have
\begin{align*}
    \det(S_E(Q\cup Q^o)) & = 4\sum_{j\text{ (mod }4)=1}^{4}\det([e_je_{j+1}])^2 \\
    &=16\sum_{j \text{ (mod } 4)=1}^{4}\left(\vol_{2}(T_j)+\vol_{2}(T_{j+1})\right)^2 \\
    &= 16\left\langle  \begin{bmatrix}
    2&1&0&1\\
    1&2&1&0\\
    0&1&2&1\\
    1&0&1&2
\end{bmatrix} \begin{bmatrix}
    T_1\\T_2\\T_3\\T_4
\end{bmatrix},\begin{bmatrix}
    T_1\\T_2\\T_3\\T_4
\end{bmatrix}\right\rangle\\
&=:16\mathcal{Z}(Q).
\end{align*}

Now \eqref{T-D ineq} yields
\begin{equation}\label{eqn: quad Z}
\mathcal{Z}(Q)\leq \frac{\left(\|\vol_1(Q) \|_{\ell_2}^{2}+ \| \vol_1(Q^o)\|_{\ell_2}^{2}\right)^2}{64}.
\end{equation}

A slight rephrasing provides the following.
\begin{theorem}\label{thm: quad S_v ineq}
Suppose that $Q\in \mathcal{P}_2$ is a quadrilateral.  Taking $p$ to be the intersection of the diagonals, we have
\begin{equation}\label{thm: quad ineq 1}
\|\vol_{2}(Q) \|_{\ell_2,0}\leq 1,\text{ and}
\end{equation}
\begin{equation} \label{thm: quad ineq 2}
\displaystyle \|\vol_2(Q) \|_{\ell_2,p}^{2}+\prod_{j=1}^{2}\left(\vol_2(T_j)+\vol_2(T_{j+2})\right)\leq \dfrac{\left(\|\vol_1(Q) \|_{\ell_2}^2+\|\vol_1(Q^o)\|_{\ell_2}^2 \right)^2}{128}.\end{equation}
The first inequality becomes an equality if $Q$ is a square, while the second inequality is an equality if and only if $Q$ is a rectangle.    
\end{theorem}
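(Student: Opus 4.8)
The plan is to extract both inequalities directly from the two frame computations already in hand and then to isolate the equality cases by deciding when the relevant frame operator is a scalar multiple of the identity. For \eqref{thm: quad ineq 2} I would only need to rephrase \eqref{eqn: quad Z}. The matrix defining $\mathcal{Z}(Q)$ is circulant with first row $(2,1,0,1)$, so expanding the quadratic form gives
\[
\mathcal{Z}(Q)=2\sum_{j=1}^{4}\vol_2(T_j)^2+2\big(\vol_2(T_1)+\vol_2(T_3)\big)\big(\vol_2(T_2)+\vol_2(T_4)\big),
\]
where the cross terms collapse via $\sum_j \vol_2(T_j)\vol_2(T_{j+1})=(\vol_2(T_1)+\vol_2(T_3))(\vol_2(T_2)+\vol_2(T_4))$. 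Dividing \eqref{eqn: quad Z} by $2$ and recognizing $\|\vol_2(Q)\|_{\ell_2,p}^2=\sum_j\vol_2(T_j)^2$ reproduces \eqref{thm: quad ineq 2} verbatim; no new inequality is needed.

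For \eqref{thm: quad ineq 1} I would instead run the trace--determinant inequality on the vertex frame $S_V(Q)$. Since the four vertices lie on $\mathbb{S}^{1}$ we have $\tr(S_V(Q))=4$, so \eqref{T-D ineq} gives $\det(S_V(Q))\le 4$. By Lemma \ref{det S lemma}, $\det(S_V(Q))=4\sum_{i<j}\vol_2(\conv(0,v_i,v_j))^2$; discarding the two diagonal pairs (which are nonnegative) leaves $4\|\vol_2(Q)\|_{\ell_2,0}^2\le \det(S_V(Q))\le 4$, i.e. \eqref{thm: quad ineq 1}. For the equality discussion it is cleaner to note that, writing $\theta_j$ for the central angle subtended by $e_j$, one has $\vol_2(\conv(0,v_j,v_{j+1}))=\tfrac12\sin\theta_j$ with $\sum_j\theta_j=2\pi$, so $\|\vol_2(Q)\|_{\ell_2,0}^2=\tfrac14\sum_j\sin^2\theta_j$.

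The equality analysis is the step I expect to be the main obstacle. For \eqref{thm: quad ineq 1} the sine form settles it: $\tfrac14\sum_j\sin^2\theta_j=1$ forces every $\theta_j=\tfrac{\pi}{2}$, hence $Q$ a square. For \eqref{thm: quad ineq 2} equality is exactly equality in \eqref{T-D ineq}, which occurs precisely when $S_E(Q\cup Q^o)$ has equal eigenvalues, i.e. $S_E(Q\cup Q^o)=\lambda I$. Here the useful identity is
\[
S_E(Q\cup Q^o)=\sum_{i<j}(v_j-v_i)(v_j-v_i)^T=4S_V(Q)-gg^T,\qquad g=\sum_{j=1}^{4}v_j,
\]
so that the tightness condition becomes $4S_V(Q)-gg^T=\lambda I$. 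I would solve this scalar-matrix equation subject to $\|v_j\|=1$ and to $0$ lying in the interior of $Q$, and then read off the corresponding quadrilateral type. Correctly translating the scalar-matrix condition into the claimed geometric class --- and confirming it is attained --- is the delicate point; the two inequalities themselves fall out immediately from the frame identities above.
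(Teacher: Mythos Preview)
Your route is the paper's: inequality \eqref{thm: quad ineq 1} is the trace--determinant inequality applied to $S_V(Q)$ (six Cauchy--Binet terms, four edge-terms retained, two diagonal-terms discarded), and inequality \eqref{thm: quad ineq 2} is exactly the rewriting of \eqref{eqn: quad Z} that you carry out. Your expansion of the circulant form
\[
\mathcal{Z}(Q)=2\sum_{j}\vol_2(T_j)^2+2\bigl(\vol_2(T_1)+\vol_2(T_3)\bigr)\bigl(\vol_2(T_2)+\vol_2(T_4)\bigr)
\]
is correct and makes explicit the step the paper leaves to the reader.

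Where you diverge is in the equality analysis. For \eqref{thm: quad ineq 1} your sine argument is a clean self-contained proof of the ``if'' direction claimed (indeed it gives ``only if'' as well), whereas the paper just points to the two dropped terms. For \eqref{thm: quad ineq 2} you correctly reduce equality to tightness of $S_E(Q\cup Q^o)$ and your identity $S_E(Q\cup Q^o)=4S_V(Q)-gg^{T}$ with $g=\sum_j v_j$ is valid. Be aware, though, that if you actually solve this you will get a \emph{square}, not a general rectangle: for the inscribed rectangle with vertices $(\pm a,\pm b)$, $a^2+b^2=1$, one finds $S_E(Q\cup Q^o)=16\operatorname{diag}(a^2,b^2)$, which is scalar only when $a=b$, and a direct check (e.g.\ $a^2=3/4$, $b^2=1/4$) shows the inequality is strict otherwise. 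The paper's one-line justification (``the areas of the four triangles are the same'') is the condition $T_1=T_2=T_3=T_4$, which is necessary but by itself does not force tightness of the edge frame; so your instinct that this is the delicate point is well placed, and your tight-frame approach is the more reliable one here.
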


\begin{proof}
The first inequality is \eqref{T-D ineq} applied to $S_V(Q)$,  the determinant side of the inequality has 6 terms, two of which are 0 when the vertices correspond to diagonals of a rectangle.  The second inequality is \eqref{eqn: quad Z}, noting that in the equality case, the areas of the four triangles formed by the vertex vectors and the edges are the same.  
\end{proof}

\begin{corollary}
    Suppose that $Q\in\mathcal{P}_2$ is a quadrilateral.  Let $\square\in\mathcal{P}_2$ be the square, then
    \[
    \vol_{2}(Q)\leq \vol_2(\square).
    \]
\end{corollary}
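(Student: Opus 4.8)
The plan is to derive the corollary directly from Theorem \ref{thm: quad S_v ineq}, specifically from the first inequality \eqref{thm: quad ineq 1}, together with the normalization that all polytopes in $\mathcal{P}_2$ are inscribed in the unit sphere $\mathbb{S}^1$. The key observation is that $\|\vol_2(Q)\|_{\ell_2,0}$ is the $\ell_2$-norm of the vector of signed/unsigned triangle areas formed by the origin and the edges, so that by the relation between the $\ell_1$ and $\ell_2$ norms we have $\vol_2(Q)=\|\vol_2(Q)\|_{\ell_1,0}$, the total area being the sum of these sub-triangle areas when the origin (the diagonal intersection point $p=0$ is available since $Q$ contains the origin in its interior) lies inside $Q$.

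First I would make explicit that, with $p$ chosen as the diagonal intersection, $Q$ decomposes into triangles whose areas are the components of $\vol_2(Q)$, so that $\vol_2(Q)=\sum_j \vol_2(T_j)=\|\vol_2(Q)\|_{\ell_1,p}$. Then I would invoke the Cauchy--Schwarz inequality exactly as in the passage from Proposition \ref{thm: S_V simplex} to Corollary \ref{cor: vol bound}, writing $\|\vol_2(Q)\|_{\ell_1,p}\leq \sqrt{k}\,\|\vol_2(Q)\|_{\ell_2,p}$ where $k$ is the number of triangle pieces, and combining with \eqref{thm: quad ineq 1} to bound the total area by a universal constant. The final step is to identify that this constant is exactly $\vol_2(\square)$, the area of the square inscribed in the unit circle, and to note that the chain of inequalities becomes equality precisely when $Q=\square$, since the square simultaneously saturates the Cauchy--Schwarz step (equal sub-areas) and the trace-determinant inequality \eqref{thm: quad ineq 1} (tight frame), which is the stated equality condition for the square.

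The main obstacle I anticipate is bookkeeping the norms and the correct counting constant so that the resulting numerical bound matches $\vol_2(\square)$ on the nose, rather than a weaker constant. In particular I must be careful about whether the decomposition into triangles uses the diagonal intersection $p$ or the origin $0$, and whether the relevant inequality \eqref{thm: quad ineq 1} is stated with the origin-based norm $\|\cdot\|_{\ell_2,0}$; reconciling the base point in the volume norm with the base point guaranteed by inscription in $\mathbb{S}^1$ (origin in the interior) is the delicate point. Provided these base points are handled consistently, the square's area $\vol_2(\square)=2$ (for the unit circumscribed square) should emerge directly, and the equality analysis follows from the equality conditions already recorded in Theorem \ref{thm: quad S_v ineq}.
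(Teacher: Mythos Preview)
Your approach is the same as the paper's: Cauchy--Schwarz on the sub-triangle areas, then \eqref{thm: quad ineq 1}, yielding $\vol_2(Q)\le 2=\vol_2(\square)$. Two clarifications resolve the base-point worry you flag. First, work with the origin throughout (not the diagonal intersection~$p$): since $0\in\Int(Q)$, the quadrilateral decomposes into the four triangles $\conv(0,v_j,v_{j+1})$ along the edges, so $\vol_2(Q)=\sum_{j=1}^{4}A_j$ and Cauchy--Schwarz gives the factor $\sqrt{4}=2$. Second, be aware that $\|\vol_2(Q)\|_{\ell_2,0}$ in \eqref{thm: quad ineq 1} comes from $\det(S_V(Q))$ via Lemma~\ref{det S lemma} and therefore has $\binom{4}{2}=6$ components---one for every pair of vertices, not just adjacent pairs---so $\bigl(\sum_{j=1}^{4}A_j^2\bigr)^{1/2}\le \|\vol_2(Q)\|_{\ell_2,0}$ trivially, and the chain $\vol_2(Q)\le 2\bigl(\sum_{j=1}^{4}A_j^2\bigr)^{1/2}\le 2\|\vol_2(Q)\|_{\ell_2,0}\le 2$ goes through with the correct constant.
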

\begin{proof}
    Applying the Cauchy-Schwarz inequality and \eqref{thm: quad ineq 1}, we have
    \[
    \vol_2(Q)\leq 2\|\vol_2(Q) \|_{\ell_2,0} \leq 2
    \]
    The equality condition forces all $T_j$ to be the same, consequently, $Q$ must be a square.
\end{proof}

\begin{remark}
    The product in \eqref{thm: quad ineq 2} satisfies
    \[
    0\leq \prod_{j=1}^{2}\left(\vol_2(T_j)+\vol_2(T_{j+2})\right)\leq \frac{(\vol_2(Q))^2}{4}.
    \]
\end{remark}

In $\mathbb{R}^3$, the situation is a bit more complicated since there are exactly two combinatorial types of convex polytopes with 5 vertices, the pyramid with quadrilateral base and the triangular bipyramid, \cite{Britton_1973}.

%%%%%%%%%%%
%%%%%%%%%%% pyramids
%%%%%%%%%%%

\subsection{Pyramids}

For the pyramid $Q$, we introduce two edges one between $v_1$ and $v_3$ and the other between $v_2$ and $v_4$.

\begin{center}
 \tdplotsetmaincoords{70}{120} 
    
    \begin{tikzpicture}[tdplot_main_coords, line join=bevel]
    
      \coordinate (A) at (0,0,0);
      \coordinate (B) at (3,0,0);
      \coordinate (C) at (3,3,0);
      \coordinate (D) at (0,3,0);
      \coordinate (O) at (1.5,1.5,3);
    
        \draw[dashed,thick] (A) -- (B);
        \draw[thick] (B) -- (C);
        \draw[thick] (C) -- (D);
        \draw[dashed,thick] (D) -- (A);
    
        \draw[dashed,thick] (A) -- (O);
        \draw[thick] (B) -- (O);
        \draw[thick] (C) -- (O);
        \draw[thick] (D) -- (O);

        \draw[dotted,red,thick] (A)--(C);
        \draw[dotted,red,thick] (B)--(D);

        \node[left] at ($(A)+(0,0,0.1)$) {${v_2}$};
        \node[left] at (B) {${v_3}$};
        \node[below] at (C) {${v_4}$};
        \node[right] at (D) {${v_1}$};
        \node at ($(O)+(0,0,0.2)$) {${v_5}$};
        \node[red] at (1.5,1.5,0) {\textbullet};
        \node[below] at (1.5,1.25,0) {$p$};
        
    \end{tikzpicture}
\end{center}
Partitioning the quadrilateral base into 4 regions as before, including $v_5$ provides a partitioning of the volume of $Q$.  Denoting by $V_j$ the matrix whose columns are the edge vectors containing $v_j$, we have
\begin{align*}
    \det(S_E(Q\cup Q^o)) &=25\sum_{j=1}^{4} \det(V_jV_j^T)^2\\
    &=5^2 (3!)^2 \sum_{j\text{ (mod }4)=1}^{4}\left( \vol_3(T_j)+\vol_3(T_{j+1})\right)^2\\
    &=5^2(3!)^2\left\langle \begin{bmatrix}
    2&1&0&1\\
    1&2&1&0\\
    0&1&2&1\\
    1&0&1&2
\end{bmatrix}\begin{bmatrix}
  T_1\\T_2\\T_3\\T_4  
\end{bmatrix},\begin{bmatrix}
  T_1\\T_2\\T_3\\T_4  
\end{bmatrix} \right\rangle\\
&=\mathcal{Z}_{\text{pyr}}(Q).
\end{align*}
In this case, \eqref{T-D ineq} yields
\begin{equation}\label{eqn: pyramid edge inequality}
    \mathcal{Z}_{\text{pyr}}(Q)\leq \frac{\left(\|\vol_1(Q) \|_{\ell_2}^{2}+\|\vol_1(Q^o) \|_{\ell_2}^{2}\right)^3}{5^23^3(3!)^2},
\end{equation}
where equality can only be obtained when the corresponding frame is tight, which occurs for a square-based pyramid whose apex is the north pole and whose base lies in the plane $z=-\frac{3}{7}$.

We may once again rephrase this into norm inequalities.

\begin{theorem}\label{thm: pyramid ineq}
    Suppose that $Q\in\mathcal{P}_3$ is a pyramid and let $p$ be the intersection of the diagonals of the base, then
    \begin{equation}\label{thm: pyr ineq 1}
        \| \vol_{3}(Q)\|_{\ell_2,0}\leq\frac{1}{(3!)}\left(\frac{5}{3}\right)^{\frac{3}{2}}, \quad\text{and}
    \end{equation}
    \begin{equation}\label{thm: pyr ineq 2}
        \|\vol_3(Q) \|_{\ell_2,p}^2+\varepsilon_{\pyr}(Q)\leq \frac{\left(\|\vol_1(Q) \|_{\ell_2}^{2}+\|\vol_1(Q^o) \|_{\ell_2}^{2}\right)^3}{2\cdot5^23^3(3!)^2}, 
    \end{equation}
    where $\varepsilon_{\pyr}(Q)=\langle(\frac{1}{2}Z_{\pyr}-I)T,T\rangle$.
\end{theorem}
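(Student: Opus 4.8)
The plan is to prove the two displayed inequalities separately. Inequality \eqref{thm: pyr ineq 1} is an application of the trace--determinant inequality to the \emph{vertex} frame of $Q$, in exact parallel with Proposition \ref{thm: S_V simplex}, while \eqref{thm: pyr ineq 2} is an algebraic repackaging of the edge-frame estimate \eqref{eqn: pyramid edge inequality} established immediately above the statement.

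For \eqref{thm: pyr ineq 1}, I would apply \eqref{T-D ineq} to $S_V(Q)$, where $Q$ has vertices $v_1,\dots,v_5$. Since $Q\in\mathcal{P}_3$ is inscribed in $\mathbb{S}^2$, each $\|v_j\|_{\ell_2}=1$, so $\tr(S_V(Q))=5$. Expanding $\det(S_V(Q))$ by Lemma \ref{det S lemma} over the $\binom{5}{3}=10$ triples of vertices, and recognizing each triple together with the origin as a tetrahedron, gives $\det(S_V(Q))=(3!)^2\|\vol_3(Q)\|_{\ell_2,0}^2$, the subscript $0$ recording the origin as common apex. Substituting $\tr=5$ and this determinant into \eqref{T-D ineq} with $d=3$ yields $(3!)^2\|\vol_3(Q)\|_{\ell_2,0}^2\le(5/3)^3$, and a square root produces \eqref{thm: pyr ineq 1}.

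For \eqref{thm: pyr ineq 2}, I would write \eqref{eqn: pyramid edge inequality} as $\langle Z_{\pyr}T,T\rangle\le\bigl(\|\vol_1(Q)\|_{\ell_2}^2+\|\vol_1(Q^o)\|_{\ell_2}^2\bigr)^3/(5^2 3^3 (3!)^2)$, where $T=(\vol_3(T_1),\dots,\vol_3(T_4))$ records the volumes of the four tetrahedra that the base diagonals and the apex cut from $Q$. Dividing by $2$ and using the splitting $\tfrac12 Z_{\pyr}=I+(\tfrac12 Z_{\pyr}-I)$, the left-hand side becomes $\langle T,T\rangle+\langle(\tfrac12 Z_{\pyr}-I)T,T\rangle=\|\vol_3(Q)\|_{\ell_2,p}^2+\varepsilon_{\pyr}(Q)$, the first summand being the $\ell_2$ norm of the four tetrahedral volumes in the $p$-partition and the second the stated cross term; this is exactly \eqref{thm: pyr ineq 2}.

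The only delicate bookkeeping --- and the step I expect to require the most care --- is keeping the two distinct simplicial decompositions straight: the origin-based splitting into $\binom{5}{3}$ tetrahedra that feeds $\det(S_V(Q))$ in the first part, versus the interior $p$-based partition of $Q$ into the four tetrahedra $T_1,\dots,T_4$ that feeds $\langle Z_{\pyr}T,T\rangle$ in the second. It is also worth verifying directly that $\langle(\tfrac12 Z_{\pyr}-I)T,T\rangle=(\vol_3(T_1)+\vol_3(T_3))(\vol_3(T_2)+\vol_3(T_4))\ge0$, the three-dimensional analogue of the product in \eqref{thm: quad ineq 2}, since this nonnegativity is what makes \eqref{thm: pyr ineq 2} a genuine sharpening. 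Finally, any equality is inherited from the two trace--determinant applications, each forcing the relevant frame to be tight; for the edge frame this is the square-based pyramid with apex at the north pole and base in the plane $z=-3/7$ identified above.
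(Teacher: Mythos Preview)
Your proposal is correct and matches the paper's approach exactly: the paper presents this theorem as a direct rephrasing of the preceding edge-frame computation (for \eqref{thm: pyr ineq 2}) together with the trace--determinant inequality applied to $S_V(Q)$ (for \eqref{thm: pyr ineq 1}), in complete parallel with the proof of Theorem \ref{thm: quad S_v ineq}. Your additional verification that $\varepsilon_{\pyr}(Q)=(\vol_3(T_1)+\vol_3(T_3))(\vol_3(T_2)+\vol_3(T_4))\ge 0$ is a nice explicit touch not spelled out in the paper.
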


\begin{remark}
    The inequalities above become inequalities when the corresponding frames are tight.  This occurs in \eqref{thm: pyr ineq 1} for a square-based pyramid $Q\in\mathcal{P}_3$ whose apex is the north pole and whose base lies in the plane $z=\frac{1}{\sqrt{6}}$.  For \eqref{thm: pyr ineq 2}, this again occurs for a square-based pyramid $Q\in\mathcal{P}_3$ with apex the north pole and whose base lies in the plane $z=-\frac{3}{7}$.

    If we omit the edges that correspond to the diagonals of the base, we get a tight frame for a pyramid whose apex is the north pole and whose base is a square lying in the plane $z=-\frac{1}{5}$.  
\end{remark}
\begin{remark}
    Note that the volume optimizer in this class is a square-based pyramid whose apex is the north pole; however, the base lies in the plane $z=-\frac{1}{3}$. 
\end{remark}

%%%%%%%%%%%
%%%%%%%%%%% bipyramids
%%%%%%%%%%%

\subsection{Bipyramids}

For the bipyramid, we need to add one edge between $v_4$ and $v_5$ as seen in the figure below.
\begin{center}
 \tdplotsetmaincoords{70}{165}
 \begin{tikzpicture}[tdplot_main_coords,scale=3]
 
    \coordinate (O) at (0,0,0);

%    \draw[thick,->] (0,0,0) -- (1,0,0) node[anchor=north east]{$x$};
%    \draw[thick,->] (0,0,0) -- (0,1,0) node[anchor=north west]{$y$};
%    \draw[thick,->] (0,0,0) -- (0,0,1) node[anchor=south]{$z$};

    \tdplotsetcoord{A}{1}{90}{0}    % cartesian (1,0,0)
    \tdplotsetcoord{C}{1}{90}{180}  % cartesian (-1,0,0)
    \tdplotsetcoord{D}{1}{90}{270}  % cartesian (0,-1,0)
    \tdplotsetcoord{E}{1}{0}{0}     % cartesian (0,0,1)
    \tdplotsetcoord{F}{1}{180}{0}   % cartesian (0,0,-1)
        % the foreground edges
            \draw[thick] (A) -- (C) ;
            \draw[thick] (E) -- (A) ;
            \draw[thick] (A) -- (F) ;
            \draw[thick] (E) -- (C) ;
            \draw[thick] (C) -- (F) ;
        % the background edges
            \draw[dashed,thick] (C) -- (D) ;
            \draw[dashed, thick] (D) -- (A) ;
            \draw[dashed,thick] (E) -- (D) ;
            \draw[dashed,thick] (D) -- (F) ;
        % the internal edge
            \draw[dotted,red,thick] (E)--(F) ;
            \draw[dotted] (A) -- ($.5*(E)+.5*(F)+(0,0,.1)$);
            \draw[dotted] (D) -- ($.5*(E)+.5*(F)+(0,0,.1)$);
            \draw[dotted] (C) -- ($.5*(E)+.5*(F)+(0,0,.1)$);
        %center point
            \node[red] at ($.5*(E)+.5*(F)+(0,0,.1)$) {\textbullet};
        % labels       
            \node[left] at (A) {\(v_3\)};
            \node[right] at (C) {\(v_1\)};
            \node[above right] at (D) {\(v_2\)};
            \node[above] at (E) {\(v_4\)};
            \node[below] at (F) {\(v_5\)};
            \node[left] at ($.5*(E)+.5*(F)+(0,0,.2)$) {$p$};
  \end{tikzpicture}
\end{center}

Using a similar notation to that found in the pyramid case, we denote by $V_j$ the matrix consisting of the edge vectors incident to $v_j$.  The volume may be partitioned into tetrahedra $\{T_j:1\leq j\leq 6\}$, such that $e_j\in\mathcal{F}_1(T_j)\cap\mathcal{F}_1(T_{j+3})$ and $v_4\in T_j$ for $j=1,2,3$.  We have
\begin{align*}
    \det(S_E(Q\cup Q^o)) &= 25\left(\sum_{j=1}^{5}\det(V_jV_j^T)^2 \right)\\
    &=5^2(3!)^2\sum_{i=0}^{1}(\vol_3(T_{3i+1})+\vol_3(T_{3i+2})+\vol_3(T_{3i+3}))^2\\
    &\qquad +5^2(3!)^2\sum_{j=1}^{3}(\vol_3(T_j)+\vol_3(T_{j+3}))^2 \\
    &=5^2(3!)^2\left\langle \begin{bmatrix}
    2 & 1& 1&1&0&0\\
    1&2&1&0&1&0\\
    1&1&2&0&0&1\\
    1&0&0&2&1&1\\
    0&1&0&1&2&1\\
    0&0&1&1&1&2
\end{bmatrix}\begin{bmatrix}
    T_1\\T_2\\T_3\\T_4\\T_5\\T_6
\end{bmatrix}, \begin{bmatrix}
    T_1\\T_2\\T_3\\T_4\\T_5\\T_6
\end{bmatrix} \right\rangle\\
    &=:5^2(3!)^2\mathcal{Z}_{\text{bipyr}}(Q)
\end{align*}

\begin{theorem}
Suppose that $Q\in\mathcal{P}_3$ is a bipyramid labeled as above.  For the point $p=\conv(v_1,v_2,v_3)\cap \conv(v_4,v_5)\in\Int(Q)$, we have
\begin{equation}\label{thm: bipyr ineq 1}
 \|\vol_{3}(Q)\|_{\ell_{2},0}^2 \leq \dfrac{1}{3!^2}\left(\dfrac{5}{3} \right)^3, \text{ and}
\end{equation}
\begin{equation}\label{thm: bipyr ineq 2}
\|\vol_3(Q)\|_{\ell_2,p}^2+\varepsilon_{\bipyr}(Q) \leq \frac{\left(\|\vol_1(Q)\|_{\ell_{2}}^{2}+\|\vol_{1}(Q^o)\|_{\ell_2}^2\right)^3}{2\cdot5^23^3(3!)^2},
\end{equation}
where $\varepsilon_{\bipyr}(Q) =\langle(\frac{1}{2}Z_{\bipyr}-I)T,T \rangle.$
\end{theorem}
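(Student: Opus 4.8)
The plan is to prove the two displayed inequalities separately, each by a single application of the trace-determinant inequality \eqref{T-D ineq}, exactly mirroring the two halves of Theorem \ref{thm: pyramid ineq}; the decomposition-and-counting work has already been absorbed into the determinant computation displayed immediately above the statement.

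For \eqref{thm: bipyr ineq 1} I would apply \eqref{T-D ineq} to the vertex frame $S_V(Q)$, repeating the argument of Proposition \ref{thm: S_V simplex}. Since the five vertices of $Q$ lie on $\Sp^2$, the corresponding unit vectors give $\tr(S_V(Q))=5$, and Lemma \ref{det S lemma} expands $\det(S_V(Q))$ as a sum over the $\binom{5}{3}=10$ triples of vertices, each triple contributing $(3!\,\vol_3(\cdot))^2$; hence $\det(S_V(Q))=(3!)^2\|\vol_3(Q)\|_{\ell_2,0}^2$. Substituting $\tr=5$ and $d=3$ into \eqref{T-D ineq} gives $(3!)^2\|\vol_3(Q)\|_{\ell_2,0}^2\le(5/3)^3$, which is exactly \eqref{thm: bipyr ineq 1}.

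For \eqref{thm: bipyr ineq 2} I would begin from the computation just above the theorem, namely $\det(S_E(Q\cup Q^o))=5^2(3!)^2\mathcal{Z}_{\bipyr}(Q)$ with $\mathcal{Z}_{\bipyr}(Q)=\langle Z_{\bipyr}T,T\rangle$, where $T=(\vol_3(T_1),\dots,\vol_3(T_6))$. The trace of the edge frame operator is the sum of the squared lengths of the ten edge vectors, that is the nine edges of $Q$ together with the added apex-to-apex edge forming $Q^o$, so $\tr(S_E(Q\cup Q^o))=\|\vol_1(Q)\|_{\ell_2}^2+\|\vol_1(Q^o)\|_{\ell_2}^2$. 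Applying \eqref{T-D ineq} and dividing by $5^2(3!)^2$ yields $\mathcal{Z}_{\bipyr}(Q)\le(\|\vol_1(Q)\|_{\ell_2}^2+\|\vol_1(Q^o)\|_{\ell_2}^2)^3/(5^2\,3^3\,(3!)^2)$. It then remains only to rewrite the left-hand side of \eqref{thm: bipyr ineq 2}: the six tetrahedra $T_j$ partition $Q$ from the common apex $p$, so $\|\vol_3(Q)\|_{\ell_2,p}^2=\langle T,T\rangle=\langle I T,T\rangle$, and because every diagonal entry of $Z_{\bipyr}$ equals $2$ the definition $\varepsilon_{\bipyr}(Q)=\langle(\tfrac12 Z_{\bipyr}-I)T,T\rangle$ gives $\|\vol_3(Q)\|_{\ell_2,p}^2+\varepsilon_{\bipyr}(Q)=\tfrac12\langle Z_{\bipyr}T,T\rangle=\tfrac12\mathcal{Z}_{\bipyr}(Q)$. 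Halving the preceding bound produces \eqref{thm: bipyr ineq 2}.

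I expect the two trace-determinant steps to be entirely routine, so the genuinely delicate point is the determinant computation that is being invoked rather than the theorem itself. If I were reconstructing it from scratch, the hard part would be verifying that the labelled tetrahedra truly tile $Q$ from the interior point $p=\conv(v_1,v_2,v_3)\cap\conv(v_4,v_5)$, that expanding $\det(S_E(Q\cup Q^o))$ by Lemma \ref{det S lemma} and grouping the spanning edge-triples by their shared vertex reproduces exactly the $6\times 6$ pattern $Z_{\bipyr}$, and that the multiplicity factor $5^2$ is correctly accounted. Granting that displayed computation, the proof reduces to the bookkeeping above; the equality analysis, namely that equality forces the relevant frame to be tight and pins down the optimal bipyramid, can be recorded in a remark as was done for the pyramid.
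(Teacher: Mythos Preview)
Your proposal is correct and matches the paper's approach exactly: the paper gives no separate proof for this theorem, but the pattern set by the quadrilateral case (Theorem \ref{thm: quad S_v ineq}) makes clear that \eqref{thm: bipyr ineq 1} is just \eqref{T-D ineq} applied to $S_V(Q)$ and \eqref{thm: bipyr ineq 2} is \eqref{T-D ineq} applied to $S_E(Q\cup Q^o)$ via the displayed determinant computation, followed by the algebraic identity $\|\vol_3(Q)\|_{\ell_2,p}^2+\varepsilon_{\bipyr}(Q)=\tfrac12\mathcal{Z}_{\bipyr}(Q)$. Your accounting of the traces, the Cauchy--Binet expansion, and the factor of $2$ from the diagonal of $Z_{\bipyr}$ are all on the mark.
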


\begin{remark}
   In $\mathcal{P}_3$, inequalities \eqref{thm: bipyr ineq 1} and \eqref{thm: bipyr ineq 2} are not sharp.  However, if we relax the condition to $Q\subset B_2(0,1)$, then we can generate tight frames, which produce the equality cases.  For either inequality to be an equality we need $\conv(v_1,v_2,v_3)$ to be a regular triangle and we need $\conv(v_4,v_5)$ to be orthogonal to the plane containing the triangle and also pass through its centroid.  Let $R$ denote the circumradius of the triangle $\conv(v_1,v_2,v_3)$ and $2h$ denote the length of $\conv(v_4,v_5)$, then both \eqref{thm: bipyr ineq 1} and \eqref{thm: bipyr ineq 2} are equalities whenever $\frac{h}{R}=\frac{\sqrt{3}}{2}$.  
   
   In this case, we have $S_E(Q)=(\frac{3}{4}R^2)I$.  Since $R\leq 1$, the largest such constant corresponds to $\conv(v_1,v_2,v_3)$ being inscribed in a great circle of $\mathbb{S}^2$.
\end{remark}

\begin{remark}
    If we left out the internal edge between $v_4$ and $v_5$ above, we could generate a tight frame when $\frac{R}{h}=\frac{2}{\sqrt{5}}$.  In this case, we have $S'_{E}(Q)=(6h^2)I$ and the largest such possibility has $R=\frac{2}{\sqrt{5}}$.
\end{remark}

%%%%%%%%%%%
%%%%%%%%%%% linear algebra stuff about the quadratic form
%%%%%%%%%%%

Choose a point $p\in\Int(Q)$, we partition into simplices $(T_j:1\leq j\leq n_d)$ bounded by segments of the edges of $Q$ such that
\begin{align*}
(a)&\quad Q=\bigcup_{j=1}^{n_d}T_j,\\ (b)&\quad\displaystyle\bigcap_{j=1}^{n_d} \mathcal{F}_0(T_j)=\{p\},\text{ and }\\
(c)&\quad \Int(T_i)\cap\Int(T_j)=\emptyset;\quad i\neq j.
\end{align*}

\begin{remark}
    Since $Q$ is convex, a natural choice is $p=C(Q)\in\Int(Q)$, the centroid of $Q$.
\end{remark}

Denote by $T$ the vector of $d$-volumes, $T=\begin{bmatrix} \vol_d(T_1)&\cdots&\vol_d(T_{n_d}) \end{bmatrix}$. Then the edge frame provides an inequality of the form
\[
c_d\mathcal{Z}(Q) \leq \frac{\left( \|\vol_1(Q) \|_{\ell_2}^2+\|\vol_1(Q^o) \|_{\ell_2}^2\right)^d}{d^d}, 
\]
where $\mathcal{Z}(Q)=\langle ZT,T\rangle$ is a quadratic form that enjoys the following easily checked properties.
\begin{enumerate}[(a)]
    \item The $n_d\times n_d$ matrix representation of $\mathcal{Z}$, denoted by $Z=[z_{i,j}]$, is symmetric \[ z_{i,j}=z_{j,i};\quad 1\leq i,j\leq n_d.\]
    \item The matrix $Z$ is positive semi-definite on $\mathbb{R}_{+}^{n_d}$ \[ \left\langle Zx,x \right\rangle\geq 0;\quad x\in\mathbb{R}_{+}^{n_d}.\]
   % {\color{blue} This follows from
    %\[
   % \langle Z T,T\rangle = (d+2)^{d-1}\sum\left|v_{j_1}\cdots v_{j_d}\right|^2 =\det(S_E(Q))\geq 0.
   % \]}
    \item The entries of $Z$ are non-negative \[ z_{i,j}\geq 0;\quad 1\leq i,j\leq n_d.\]
    %{\color{blue} Not too much to say about this one.  It follows from expanding all the squares. }
    \item $Z$ is constant along the diagonal \[ z_{i,i}=\alpha;\quad 1\leq i\leq n_d.\]
    %{\color{blue} Since the over count factor is consistent, we get a constant diagonal. }
    \item The sum of each row is constant \[  \sum_{j=1}^{n_d}z_{i,j}=\varrho; \quad 1\leq i\leq n_d.\]
    %{\color{blue} This follows from the fact that the number of cells adjacent to each cell $T_j$ is the same.  $T_j$ is adjacent $T_i$ if they share a facet, that is $\mathcal{F}_{d-1}(T_i)\cap\mathcal{F}_{d-1}( T_j)\neq 0$. }
\end{enumerate}
The following result is a straightforward of consequence of the above properties.
\begin{lemma}\label{lem: spectral radius}
Suppose that $Z$ satisfies the properties above, and denote by $\mathcal{E}(Z)$, the set of eigenvalues for $Z$.  Then
\begin{enumerate}[(i)]
    \item $\mathcal{E}(Z)\subset[0,\varrho]$,
    \item $\max(\mathcal{E}(Z))=\varrho$, and
    \item $Z \mathbf{1}_{n_d}=\varrho\mathbf{1}_{n_d},$ where $\mathbf{1}=\begin{bmatrix}
        1&\cdots&1
    \end{bmatrix}^{T}$.
    \item For all $\lambda\in\mathcal{E}(Z)$ with $\lambda\neq\varrho$, the eigenvector $v_\lambda=\begin{bmatrix}
        v_{\lambda,1} & \cdots & v_{\lambda,n_d}
    \end{bmatrix}^T$ satisfies\\ $\min(\{v_{\lambda,j}:1\leq j\leq n_d\})<0$, that is, $v_\lambda \notin \mathbb{R}^{n_d}_{+}$.  
\end{enumerate}
\end{lemma}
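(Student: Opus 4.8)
The plan is to establish the four claims in the order (iii), (ii), (i), (iv), since each feeds into the next. I would first dispatch (iii) by direct computation: property (e) makes every row of $Z$ sum to $\varrho$, so the $i$-th coordinate of $Z\mathbf{1}_{n_d}$ is $\sum_{j=1}^{n_d} z_{i,j} = \varrho$, whence $Z\mathbf{1}_{n_d} = \varrho\,\mathbf{1}_{n_d}$ and $\varrho \in \mathcal{E}(Z)$.

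Next I would pin down the top of the spectrum. Using non-negativity (c) together with the constant row sum (e), the induced $\infty$-norm is $\|Z\|_\infty = \max_i \sum_j |z_{i,j}| = \varrho$; since the spectral radius is bounded by any induced operator norm while $Z$ is symmetric (a) with real spectrum, every eigenvalue satisfies $|\lambda| \le \varrho$. With (iii) this gives $\max(\mathcal{E}(Z)) = \varrho$, proving (ii), and already places $\mathcal{E}(Z)$ in $[-\varrho, \varrho]$.

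The delicate step, and the one I expect to be the main obstacle, is the lower bound $\lambda \ge 0$ needed for (i). Property (b) as stated only guarantees $\langle Zx, x\rangle \ge 0$ on the cone $\mathbb{R}_+^{n_d}$, which is strictly weaker than full positive semidefiniteness: indeed $\tfrac{\varrho}{2}(J - I)$, with $J$ the all-ones matrix, satisfies (a)--(e) yet has negative eigenvalues, so the cone version alone cannot force $\lambda \ge 0$. The way around this is to exploit the origin of the form. By Cauchy--Binet (Lemma \ref{det S lemma}) the determinant is $\sum_k \det(V_k)^2$, and each $\det(V_k) = d!\sum_{j \in S_k}\vol_d(T_j)$ is $d!$ times a $0/1$-combination $\ell_k(T)$ of the partition volumes. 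Hence $\langle Zx, x\rangle$ is a non-negative multiple of the sum of squares $\sum_k \ell_k(x)^2$, an identity of polynomials valid for all $x \in \mathbb{R}^{n_d}$, so $Z$ is positive semidefinite on all of $\mathbb{R}^{n_d}$. For any eigenpair $(\lambda, v)$ this yields $\lambda\|v\|^2 = \langle Zv, v\rangle \ge 0$, so $\lambda \ge 0$; combined with the upper bound we obtain $\mathcal{E}(Z) \subset [0, \varrho]$, which is (i).

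Finally, for (iv) I would argue by orthogonality. Fix $\lambda \ne \varrho$ with eigenvector $v_\lambda$. Since $Z$ is symmetric, eigenspaces belonging to distinct eigenvalues are orthogonal, and $\mathbf{1}_{n_d}$ lies in the $\varrho$-eigenspace by (iii), so $\langle v_\lambda, \mathbf{1}_{n_d}\rangle = \sum_{j} v_{\lambda, j} = 0$. A nonzero vector whose coordinates sum to zero must have a strictly negative coordinate, giving $\min_j v_{\lambda, j} < 0$ and hence $v_\lambda \notin \mathbb{R}_+^{n_d}$, as required.
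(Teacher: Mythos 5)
Your proof is correct, and it is in fact more careful than the paper itself, which offers no proof at all: the authors merely assert that the lemma is a straightforward consequence of properties (a)--(e). For parts (iii), (ii) and (iv) that assertion is accurate, and your arguments (constant row sums give $Z\mathbf{1}_{n_d}=\varrho\mathbf{1}_{n_d}$; the spectral radius of a symmetric nonnegative matrix is bounded by the induced $\ell_\infty$ norm, i.e.\ by $\varrho$; an eigenvector of a symmetric matrix for $\lambda\neq\varrho$ is orthogonal to $\mathbf{1}_{n_d}$, and a nonzero vector with zero coordinate sum must have a negative entry) are surely the intended ones. Your key contribution is the observation about part (i): property (b) as literally stated is only copositivity, and copositivity together with (a), (c), (d), (e) does \emph{not} imply nonnegative eigenvalues --- a positive multiple of $J-I$ (with $J$ the all-ones matrix, and optionally $+\,\epsilon I$ to make the diagonal positive) satisfies all five properties yet has the negative eigenvalue $-1$ with high multiplicity. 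So claim (i) genuinely does not follow from the stated hypotheses, and your repair --- returning to the Cauchy--Binet construction to note that $\langle Zx,x\rangle$ is by definition a positive multiple of a sum of squares $\sum_k \ell_k(x)^2$ of $0/1$ linear forms, equivalently that $Z$ is (up to a positive constant) a sum of rank-one matrices $\ell_k\ell_k^T$ and hence positive semidefinite on all of $\mathbb{R}^{n_d}$ --- is exactly the right fix, and is consistent with how $Z$ arises in the quadrilateral, pyramid and bipyramid computations. The paper should either strengthen (b) to full positive semidefiniteness (which the construction delivers) or include an argument like yours; as written, the lemma's part (i) is not a consequence of the listed properties.
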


\begin{remark}
    As a consequence of $(iv)$, the only eigenvector that corresponds to a realizable polytope is $v_\sigma=\mathbf{1}_{n_d}$.  
\end{remark}

%%%%%%%%%%%%%%%%%%
%%%%%%%%%%%%%%%%%%
%%%%%%%%%%%%%%%%%%

\begin{comment}
\begin{remark}
    For $Q\in\mathcal{P}_2$, the corresponding degeneracy with respect to perimeter $\mathcal{E}_1(Q)$ is given by
    \[
    \mathcal{E}_1(Q):=\left(\vol_1(e_1)+\vol_1(e_3)\right)\left(\vol_1(e_2)+\vol_1(e_4) \right)
    \]
    which satisfies
    \[
    0\leq \mathcal{E}_1(Q)\leq \left(\frac{\vol_1(Q)}{2}\right)^2.
    \]
    The equality conditions are similar to those for $\mathcal{E}_2(Q)$.  The upper bound is obtained by a square.
\end{remark}

\end{comment}

\begin{conjecture}
    Let $d\in\mathbb{N}$ and suppose $Q\subset\mathbb{S}^{d-1}$ is a convex polytope with $d+2$ vertices.  Then the following holds
    \[
    \|\vol_d(Q) \|_{\ell_2}^2+\varepsilon_d(Q)\leq \frac{\left(\| \vol_1(Q)\|_{\ell_2}^2+\|\vol_d(Q^o) \|_{\ell_2}^{2}\right)^d}{ c_d\cdot d!^2\cdot d^d},
    \]
    where $c_d=2(d+2)^{d-1}$ and $\varepsilon_d$ is a quadratic form that satisfies
    \[
    0\leq \varepsilon_d(Q)\leq C\vol_d(Q)^2.
    \]
\end{conjecture}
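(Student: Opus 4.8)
The plan is to establish the conjecture by the same decomposition-and-averaging strategy used for quadrilaterals, pyramids, and bipyramids, applied uniformly in dimension $d$. First I would fix $Q \subset \mathbb{S}^{d-1}$ convex with $d+2$ vertices, complete its edge graph to $Q \cup Q^o$, and choose an interior point $p$ adapted to the combinatorial type, partitioning $Q$ into simplices $(T_j : 1 \le j \le n_d)$ satisfying properties $(a)$--$(c)$. As in the low-dimensional cases, I would expand $\det(S_E(Q \cup Q^o))$ via Lemma~\ref{det S lemma}: each spanning $d$-subset of edge vectors contributes $(d!)^2$ times a squared signed volume, and since every vertex of a $(d+2)$-vertex polytope has a determined local edge structure, each such determinant rewrites as $d!$ times a sum of the adjacent $\vol_d(T_j)$'s. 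Collecting these gives $\det(S_E(Q \cup Q^o)) = c_d\,(d!)^2\,\mathcal{Z}(Q)$ with $c_d = 2(d+2)^{d-1}$ the multiplicity coming from the Cauchy--Binet count of spanning subsets, and $\mathcal{Z}(Q) = \langle Z T, T\rangle$ the quadratic form in the volume vector $T$.

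The second step is to apply the trace-determinant inequality \eqref{T-D ineq}. Here $\tr(S_E(Q \cup Q^o)) = \|\vol_1(Q)\|_{\ell_2}^2 + \|\vol_1(Q^o)\|_{\ell_2}^2$ by definition, so \eqref{T-D ineq} yields
\[
c_d\,(d!)^2\,\mathcal{Z}(Q) \le \frac{\left(\|\vol_1(Q)\|_{\ell_2}^2 + \|\vol_1(Q^o)\|_{\ell_2}^2\right)^d}{d^d}.
\]
To reach the stated form I would split $\mathcal{Z}(Q) = \langle Z T, T\rangle$ into its diagonal contribution and the off-diagonal remainder. Property $(d)$ of Lemma~\ref{lem: spectral radius} (constant diagonal $\alpha$) gives $\langle \alpha I\, T, T\rangle = \alpha \|\vol_d(Q)\|_{\ell_2,p}^2$, and after normalizing so that the diagonal coefficient matches, one sets $\varepsilon_d(Q) := \langle (\tfrac{1}{2}Z - I) T, T\rangle$ in exact analogy with $\varepsilon_{\pyr}$ and $\varepsilon_{\bipyr}$; the factor $2$ in the denominator $c_d \cdot d!^2 \cdot d^d$ absorbs the rescaling, exactly as it did in \eqref{thm: pyr ineq 2} and \eqref{thm: bipyr ineq 2}.

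The final step is to verify the two-sided bound $0 \le \varepsilon_d(Q) \le C\,\vol_d(Q)^2$. The lower bound should follow from property $(b)$ (positive semidefiniteness of $Z$ on the positive orthant, where $T$ lives) together with the choice of $\varepsilon_d$ as a nonnegative combination; the upper bound follows because each $\vol_d(T_j) \le \vol_d(Q)$ and the off-diagonal entries are uniformly bounded by $(c)$ and $(d)$, so $\varepsilon_d$ is a fixed bounded quadratic form evaluated at a vector of entries no larger than $\vol_d(Q)$, giving a dimensional constant $C$. The \textbf{main obstacle} is the combinatorial step: unlike the simplex, a polytope with $d+2$ vertices has several combinatorial types (generalizing the pyramid/bipyramid dichotomy in $\mathbb{R}^3$), governed by Gale diagrams, and one must check that the Cauchy--Binet multiplicity is the \emph{same} constant $c_d = 2(d+2)^{d-1}$ across all types, and that the resulting $Z$ always satisfies the five properties of Lemma~\ref{lem: spectral radius}. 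Establishing this type-independence of $c_d$ and the structural properties of $Z$ in full generality is precisely what the low-dimensional cases only verified by hand, and is the crux of promoting the statement from conjecture to theorem.
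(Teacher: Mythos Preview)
The statement is labeled a \emph{conjecture} in the paper, and the paper offers no proof of it; it is followed only by remarks explaining the intended partitioning, the interpretation of $c_d=2(d+2)^{d-1}$ as a spanning-tree count on $K_{d+2}$ with an edge removed, and the observation that sharpness cannot in general be attained. So there is no paper proof to compare against.

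Your proposal is not really a proof either, and you are honest about this: you lay out exactly the strategy the authors used in dimensions $2$ and $3$ (complete the edge graph, expand $\det(S_E)$ by Cauchy--Binet, collect the result as a quadratic form $\mathcal{Z}(Q)=\langle ZT,T\rangle$, apply the trace-determinant inequality, then split off the diagonal), and you correctly isolate the obstruction. The genuine gap is precisely the one you name at the end: for $d\ge 4$ there are multiple combinatorial types of $(d+2)$-vertex polytopes, and one must show (i) that the Cauchy--Binet multiplicity is the same constant $c_d$ for every type, and (ii) that the matrix $Z$ always has the five structural properties listed before Lemma~\ref{lem: spectral radius}. Neither you nor the paper supplies this; it is exactly why the authors state the result as a conjecture rather than a theorem. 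Your sketch of the two-sided bound on $\varepsilon_d$ is also soft---the lower bound $\varepsilon_d(Q)\ge 0$ does not follow automatically from positive semidefiniteness of $Z$ on the positive orthant, since $\tfrac{1}{2}Z-I$ need not inherit that property---but this is secondary to the combinatorial issue.
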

\begin{remark}
    A collection of $d$ edge vectors (actual or synthetic) will bound a closed simplicial region with $d$-content in $Q$.  We partition $Q$ into these regions and take the center to be the point of intersection of these closed regions.  The number of regions depends on the configuration of the edges as can be seen already in $\mathbb{R}^3$ above.
\end{remark}
\begin{remark}
    The constant $c_d=2(d+2)^{d-1}$ above is the number of spanning trees on the complete graph $K_{d+2}$ where we have removed an edge, see \cite[A007334]{oeis}.
\end{remark}
\begin{remark}
    In general, it is not possible to generate a sharp bound for the inequality above.  This limitation exists because it is not always possible to generate a tight frame for a vector space with vectors that satisfy desired constraints.  See for instance, \cite{fickus_2016} and \cite{SUSTIK_2007}.  However, similar to the bipyramid above, we may be able to find tight frames if we allow some of the vertices to be on the interior of the unit ball.
\end{remark}

\bibliographystyle{plainurl} 
\bibliography{main}

\end{document}